\begin{document}
\maketitle

\section{Introduction}

The main method for calculating the probability density and the distribution function of stable laws is the use of integral representations of these quantities. The reason for this situation is the impossibility of obtaining expressions for these quantities in elementary functions in the general case. The exception comprises only five cases: the Levy distribution ($\alpha=1/2,\theta=1$), the symmetric Levy distribution ($\alpha=1/2,\theta=-1$), the Cauchy distribution ($\alpha=1,\theta=0$), Gaussian distribution ($\alpha=2,\theta=0$) and generalized Cauchy distribution ($\alpha=1,-1\leqslant\theta\leqslant1$).

When performing the inverse Fourier transform of the characteristic function, it is possible to obtain two types of integral representations.  The first type includes representations expressing the probability density and the distribution function in terms of an improper integral of the oscillating function. The works \cite{Nolan1999,Ament2018} are devoted to  obtaining and studying such representations. The second type includes integral representations expressing the probability density and distribution function in terms of a definite integral of a monotone function.  The works\cite{Zolotarev1964_en,Zolotarev1986,Uchaikin1999} are devoted to obtaining and studying integral representations in the parameterization ``B",  the works  \cite{Nolan1997, Nolan2022}, are devoted to obtaining and studying integral representations in the parameterization  ``M"  and the paper \cite{Saenko2020b} to devoted to the parameterization ``C". Here, to determine various parametrizations of the characteristic function of the stable law, the notation was used which was introduced in the book  \cite{Zolotarev1986}. Further in the text, we will continue adhering to these notations.

Integral representations of the second type are most widely used due to the convenience of their use. The method of the inverse Fourier transform, which leads to this type of integral representations, is called the stationary phase method. The convenience of using such representations lies in the fact that the integrand is a monotonic function and in a wide range of coordinates and parameters there are no difficulties in calculating the definite integral of such a function. The integral representations for the parameterization ``M" of the characteristic function gained in popularity. These integral representations served as the basis for the development of several software products \cite{Liang2013,Royuela-del-Val2017,Julian-Moreno2017,Rimmer2005,Veillette2008}.

Both the first and the second type of the integral representation of a stable law have their disadvantages. The main difficulty in using the integral representation of the first type is the oscillating integrand. In some cases, numerical methods are cannot calculate the integral of such a function. In particular,  the work \cite{Nolan1999} indicates the following problems for the integral representation in the parameterization ``M": 1)~in the case $\alpha<0.75$ the integration domain becomes very large, which leads to difficulties in numerical integration; the integration domain becomes very large which leads to difficulties in numerical integration; 2)~if $\beta\neq0$ and $0<|\alpha-1|<0.001$ there are calculation problems in calculating the term with $(\tan(\pi\alpha/2)(t-t^\alpha)$; 3)~when $x$ is very large, the integrand oscillates very quickly. In the paper \cite{Ament2018} the authors propose to modernize the standard quadrature numerical integration algorithm to adapt the calculation of integrals of an oscillating function. This gave an opportunity to reduce the lower limit of the parameter $\alpha$ from 0.75 to 0.5. To calculate the probability density at large $x$ it is proposed to use the expansion of the probability density in a power series. However, the paper points out that the proposed scheme is not applicable for symmetric distributions in the case of $\alpha<0.5$ and for asymmetric distributions in the cases of $\alpha<0.5$ and $0.9<\alpha<1.1$.

The second type of integral representations also has some features that lead to difficulties in numerical integration.  The cause of the calculation difficulties is the behavior of the integrand at very small and very large values of the coordinate $x$. In the case of the integral representation for the probability density in these two cases, the integrand has the form of a very narrow peak. As a result, numerical integration algorithms cannot determine this peak and give an incorrect integration result. This behavior of the integrand is pointed out in the articles \cite{Nolan1997,Pogany2015,Royuela-del-Val2017,Ament2018}. To eliminate this problem various numerical algorithms are proposed to use in the papers \cite{Nolan1997,Royuela-del-Val2017,Julian-Moreno2017} However, all these algorithms increase the accuracy of calculations, but do not eliminate the problem completely.

To solve this problem, it is expedient to use the approaches not possessing any specific features in these areas that can lead to calculation difficulties. The most suitable idea is to use expansions of the probability density and distribution function in power series at $x\to0$ and $x\to\infty$. The articles \cite{Saenko2022b,Saenko2023} show that  the use of expansions of stable laws in power series at  $x\to0$ and $x\to\infty$ makes it possible to solve the problem of calculating stable laws completely at very small and very large $x$. However, in these articles the problem of calculating the distribution function of a strictly stable law in the case of $x\to\infty$ was left out of consideration. Therefore, the main purpose of this paper is to fill this gap.

This paper considers the problem of calculating the distribution function in the case of $x\to\infty$ with the characteristic function
\begin{equation}\label{eq:CF_formC}
  \hat{g}(t,\alpha,\theta,\lambda)=\exp\left\{-\lambda |t|^\alpha\exp\{-i\tfrac{\pi}{2}\alpha\theta\sign t\}\right\},\quad t\in\mathbf{R},
\end{equation}
where $\alpha\in(0,2]$, $|\theta|\leqslant\min(1,2/\alpha-1)$, $\lambda>0$. According to the terminology introduced in the book \cite{Zolotarev1986}, this characteristic function corresponds to the parameterization ``C". In the paper \cite{Saenko2020b} the inverse Fourier transform of this characteristic function was performed using the stationary phase method, and integral representations for the probability density and distribution function were obtained (see~Appendix~\ref{sec:IntRepr}). As one can see the formula (\ref{eq:G(x)_int}) expresses the distribution function in terms of a definite integral, and belongs to the second type of integral representations. In the general case, the integrand in (\ref{eq:G(x)_x>0}) is a monotonic function varying from 0 to 1 as the integration variable changes from the lower limit point $\varphi=-\pi\theta/2$ to the upper limit point $\varphi=\pi/2$. In the case $0<\alpha<1$ it is a decreasing function, and in the case of $1<\alpha\leqslant2$ it is an increasing function. However, at very small and very large values of $x$ the change in the function from 0 to 1 occurs so fast that numerical integration algorithms cannot recognize it. As a result, this leads to an incorrect calculation of the integral and points to the fact that in this range of coordinates it is no longer possible to use the integral representation (\ref{eq:G(x)_int}) to calculate the distribution function. In this paper, to calculate the distribution function in the specified range of coordinates, it is proposed to use the expansion of the distribution function in a power series at $x\to\infty$. To do this, such an expansion of the distribution function will be obtained and the conditions for the applicability of this expansion will be determined.

\section{Representation of the distribution function as a power series}

We will obtain the expansion of the distribution function in a series at $x\to\infty$ for a strictly stable law with a characteristic function (\ref{eq:CF_formC}). Without loss of generality, we will assume that the scale parameter is $\lambda=1$. It is generally accepted to call strictly stable laws with the scale parameter $\lambda=1$ standard strictly stable laws and shorthand notations are used for them.  The characteristic function is usually denoted as $\hat{g}(t,\alpha,\theta,1)\equiv\hat{g}(t,\alpha,\theta)$, the probability density - $g(x,\alpha,\theta,1)\equiv g(x,\alpha,\theta)$, the distribution function - $G(x,\alpha,\theta,1)\equiv G(x,\alpha,\theta)$,  a strictly stable random quanitity $Y(\alpha,\theta,1)\equiv Y(\alpha,\theta)$. Further in the text we will use this notation.  It should be noted that to transform a standard strictly stable law into a strictly stable law with an arbitrary $\lambda$ one can use remark 5 and remark 7 from the paper \cite{Saenko2020b}, (see also\cite{Zolotarev1986, Zolotarev1999}).

 We also need the inversion property, which for a standard strictly stable law with the characteristic function (\ref{eq:CF_formC}) has the form

\begin{property}\label{prop:Inversion}
For any admissible parameters $(\alpha,\theta)$
\begin{equation*}
  Y(\alpha,-\theta)\stackrel{d}{=}-Y(\alpha,\theta).
\end{equation*}
\end{property}

The proof of this property was given in the paper \cite{Saenko2020b} (see also \cite{Zolotarev1986,Uchaikin1999}).   In the terms of the distribution function $G(x,\alpha,\theta)$  this property takes the form
\begin{equation}\label{eq:InversionFormula}
  G(-x,\alpha,\theta)=1-G(x,\alpha,-\theta).
\end{equation}
The convenience of this property lies in the fact that, when studying the distribution function, it gives us an opportunity to confine ourselves to considering only the case $x\geqslant0$. Expressions for the case $x<0$ are obtained using this formula.

To solve the stated problem, we need to expand the probability density into a series at $x\to\infty$. A similar expansion was obtained in the article \cite{Saenko2023}, where the following theorem was proved.

\begin{theorem}\label{theor:pdfExpansion}
In the case $x\to\pm\infty$ for any admissible set of parameters $(\alpha,\theta)$ except for the values $\theta=\pm1$ for the probability density $g(x,\alpha,\theta)$ the representation in the form of a power series is valid
\begin{equation*}
  g(x,\alpha,\theta)=g_N^{\infty}(|x|,\alpha,\theta^*)+R_N^{\infty}(|x|,\alpha,\theta^*),
\end{equation*}
where $\theta^*=\theta\sign(x)$ and
\begin{equation}\label{eq:g_NInf}
  g_N^\infty(x,\alpha,\theta)=\frac{1}{\pi}\sum_{n=0}^{N-1}\frac{(-1)^{n+1}}{n!}\Gamma(\alpha n+1)\sin\left(\tfrac{\pi}{2}\alpha n(1+\theta)\right) x^{-\alpha n-1},\quad x>0,
\end{equation}
\begin{equation*}
|R_N^{\infty}(x,\alpha,\theta)|\leqslant
  \frac{x^{-\alpha N-1}}{\pi N!}\left(\Gamma(\alpha N+1)+x^{-\alpha}\Gamma(\alpha(N+1)+1)\right),\quad x>0.
\end{equation*}
\end{theorem}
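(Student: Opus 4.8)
The plan is to reduce to $x>0$ and then extract the large-$x$ behaviour from the Fourier inversion of the characteristic function (\ref{eq:CF_formC}). Using Property~\ref{prop:Inversion}, or equivalently the inversion relation (\ref{eq:InversionFormula}) differentiated to give the density identity $g(-x,\alpha,\theta)=g(x,\alpha,-\theta)$, the case $x<0$ is reduced to $x>0$ with $\theta$ replaced by $-\theta$; this is exactly what the substitution $\theta^*=\theta\sign(x)$ and the argument $|x|$ record, so it suffices to treat $x>0$. For $x>0$ and $\lambda=1$ I would start from
\[
g(x,\alpha,\theta)=\frac1\pi\,\mathrm{Re}\int_0^\infty\exp\bigl\{-ixt-t^\alpha e^{-i\pi\alpha\theta/2}\bigr\}\,dt,
\]
which converges because for admissible $\theta$ with $\theta\neq\pm1$ one has $\cos(\pi\alpha\theta/2)>0$, so the inner exponential decays at infinity.

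The core step is to expand the inner factor $\exp\{-t^\alpha e^{-i\pi\alpha\theta/2}\}$ in its Taylor series in $t^\alpha$ up to order $N-1$ and integrate term by term. Each monomial yields the integral $\int_0^\infty t^{\alpha n}e^{-ixt}\,dt=\Gamma(\alpha n+1)(ix)^{-\alpha n-1}$, understood in the Abel-regularised sense $\lim_{\varepsilon\to0^+}\int_0^\infty t^{\alpha n}e^{-(\varepsilon+ix)t}\,dt$, equivalently obtained by rotating the contour into the lower half-plane. Writing $(ix)^{-\alpha n-1}=x^{-\alpha n-1}e^{-i\pi(\alpha n+1)/2}$, combining the phase $e^{-i\pi\alpha\theta n/2}$ carried by the expansion, and using $\cos\bigl(\tfrac\pi2\alpha n(1+\theta)+\tfrac\pi2\bigr)=-\sin\bigl(\tfrac\pi2\alpha n(1+\theta)\bigr)$, the real part of the $n$-th term becomes precisely the $n$-th summand of (\ref{eq:g_NInf}); the $n=0$ term drops because $\sin 0=0$, in agreement with the leading decay $x^{-\alpha-1}$.

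For the remainder I would split the Taylor remainder after $N-1$ terms as the first neglected monomial (the $n=N$ term) plus the genuine tail. The $n=N$ term integrates exactly to a quantity of modulus at most $\frac{x^{-\alpha N-1}}{\pi N!}\Gamma(\alpha N+1)$, which is the first term of the estimate. For the tail I would use the integral form of the Lagrange remainder, $e^{-z}-\sum_{n=0}^{N}\frac{(-z)^n}{n!}=\frac{(-1)^{N+1}}{N!}z^{N+1}\int_0^1(1-s)^N e^{-sz}\,ds$ with $z=t^\alpha w$ and $w=e^{-i\pi\alpha\theta/2}$, interchange the order of integration, and bound $(1-s)^N\leq1$ together with $\bigl|\int_0^\infty t^{\alpha(N+1)}e^{-ixt}e^{-st^\alpha w}\,dt\bigr|\leq\Gamma(\alpha(N+1)+1)\,x^{-\alpha(N+1)-1}$; the $1/N!$ prefactor then produces the second term of the estimate.

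I expect the main obstacle to be precisely the justification and estimation of these term-by-term integrals. They are only conditionally convergent — after subtracting the Taylor polynomial the integrand in fact grows like $t^{\alpha(N-1)}$ at infinity — so the essential oscillation of $e^{-ixt}$ must be exploited, and a naive bound using $|e^{-ixt}|=1$ diverges. Controlling the inner integral $\int_0^\infty t^{\alpha(N+1)}e^{-ixt}e^{-st^\alpha w}\,dt$ uniformly in $s$ by the clean Gamma value, rather than by a weaker $\sin\psi$-dependent constant coming from a generic contour rotation on a ray $0<\psi<\tfrac{\pi}{2}\bigl(\tfrac1\alpha-\theta\bigr)$, is the delicate point; it is here that the exclusion $\theta\neq\pm1$, which guarantees $\cos(\pi\alpha\theta/2)>0$ and a non-degenerate rotation sector, is used. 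This same conditional convergence is what makes the series merely an asymptotic expansion when $\alpha>1$ while it converges when $\alpha<1$.
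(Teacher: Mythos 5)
The paper does not actually prove Theorem~\ref{theor:pdfExpansion}: it is imported from \cite{Saenko2023}, and the only trace of its proof visible here is the remainder formula (\ref{eq:R_NInf_0}) together with the Gamma-integral (\ref{eq:Gamma_intRepr1}). That machinery reveals the source's method: starting from Lemma~\ref{lem:Inverse}, the contour is rotated \emph{first}, via $t=i\tau e^{-i\pi\theta/2}/x$, so that the linear factor becomes the absolutely decaying $\exp\{-\tau e^{-i\pi\theta/2}\}$, and only then is the now-small factor $\exp\{-(i\tau/x)^\alpha\}$ expanded with a Lagrange remainder and integrated term by term against absolutely convergent Gamma integrals. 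Your proposal runs the same two steps in the opposite order: expand first inside the original oscillatory integral, then Abel-regularise each $\int_0^\infty t^{\alpha n}e^{-ixt}\,dt$. Since the Abel regularisation is implemented by exactly that contour rotation, the two routes are essentially equivalent; your phase bookkeeping producing $-\sin\left(\tfrac{\pi}{2}\alpha n(1+\theta)\right)$ and the vanishing of the $n=0$ term are correct, and your two-piece remainder reproduces the stated bound arithmetically.

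Two concrete criticisms. First, you misplace the role of the hypothesis $\theta\neq\pm1$. You claim it guarantees $\cos(\pi\alpha\theta/2)>0$; but admissibility already gives $|\alpha\theta|\leqslant1$ for every $(\alpha,\theta)$ including $\theta=\pm1$ (e.g. $\alpha=1/2$, $\theta=1$ has $\cos(\pi/4)>0$), so that condition excludes nothing. What actually fails at $\theta=\pm1$ is $\cos(\pi\theta/2)>0$: the rotation ray $\arg t=\tfrac{\pi}{2}(1-\theta)$ degenerates onto the real axis, the factor $\exp\{-\tau e^{-i\pi\theta/2}\}$ stops decaying, and $\beta=-\pi\theta/2$ leaves the open interval required in (\ref{eq:Gamma_intRepr1}); the present paper states this explicitly when it justifies that formula in the proof of Theorem~\ref{theor:cdfExpansion}. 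Second, the uniform bound $\bigl|\int_0^\infty t^{\alpha(N+1)}e^{-ixt}e^{-st^\alpha w}\,dt\bigr|\leqslant\Gamma(\alpha(N+1)+1)\,x^{-\alpha(N+1)-1}$, which you correctly flag as the delicate point, is not literally true for all $x>0$: on the rotated contour the factor $e^{-st^\alpha w}$ acquires modulus $\exp\bigl\{s\tau^\alpha\bigl|\cos\bigl(\tfrac{\pi}{2}\alpha(1+\theta)\bigr)\bigr|x^{-\alpha}\bigr\}>1$ for many admissible parameters, so the clean Gamma value is only the leading behaviour as $x\to\infty$, not a pointwise majorant. This is the same gap the source covers with the standing assumption $\tau/x\to0$ and the first-order truncation of $e^{-\zeta(i\tau/\xi)^\alpha}$; your argument is therefore no less rigorous than the cited one, but the remainder estimate should be presented as an asymptotic statement rather than an inequality valid for every $x>0$.
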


Using this theorem, one can obtain an expansion of the distribution function at $x\to\infty$. As a result, the following theorem turns out to be true.

\begin{theorem}\label{theor:cdfExpansion}
   For any admissible values of parameters $(\alpha,\theta)$ except for the values $\theta=\pm1$ at $x\to\pm\infty$ for the distribution function $G(x,\alpha,\theta)$ the representation in the form of a power series is valid
  \begin{equation}\label{eq:G(x)}
    G(x,\alpha,\theta)=\tfrac{1}{2}(1+\sign(x))-\sign(x)\left(G_N^{\infty}(|x|,\alpha,\theta^*)+\mathcal{R}_{N}^{\infty}(|x|,\alpha,\theta^*)\right),
  \end{equation}
  where $\theta^*=\theta\sign(x)$,
  \begin{align}
    G_N^{\infty}(x,\alpha,\theta)&=\frac{1}{\pi}\sum_{n=1}^{N-1}\frac{(-1)^{n+1}}{n!}\Gamma(\alpha n)\sin\left(\tfrac{\pi}{2}\alpha n(1+\theta)\right)x^{-\alpha n},\quad x>0,\label{eq:G_NInf}\\
    |\mathcal{R}_N^{\infty}(x,\alpha,\theta)|&\leqslant\frac{x^{-\alpha N}}{\pi N!}\left(\Gamma(\alpha N)+x^{-\alpha}\Gamma(\alpha(N+1))\right),\quad x>0.\label{eq:RG_NInf}
  \end{align}
\end{theorem}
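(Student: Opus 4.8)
The plan is to exploit the fact that the distribution function is the tail integral of the density and to feed the expansion of Theorem~\ref{theor:pdfExpansion} into that integral. Because of the inversion formula (\ref{eq:InversionFormula}) it suffices to treat $x>0$ in detail and then recover the case $x<0$ at the end. For $x>0$ I would start from
\begin{equation*}
  G(x,\alpha,\theta)=1-\int_x^{\infty}g(t,\alpha,\theta)\,dt,
\end{equation*}
which is legitimate since $g$ is a probability density. On the half-line $t\geqslant x>0$ one has $\sign t=1$, so $\theta^*=\theta$ and Theorem~\ref{theor:pdfExpansion} provides the exact splitting $g(t,\alpha,\theta)=g_N^{\infty}(t,\alpha,\theta)+R_N^{\infty}(t,\alpha,\theta)$. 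I would then integrate this identity over $[x,\infty)$, decomposing the tail integral into a contribution from the finite sum $g_N^{\infty}$ and a contribution from the remainder $R_N^{\infty}$.

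The first contribution is handled by integrating (\ref{eq:g_NInf}) term by term, which is immediate because it is a \emph{finite} sum. The $n=0$ term carries the factor $\sin(0)=0$ and therefore drops out, so only $n\geqslant1$ survive; for those one uses
\begin{equation*}
  \int_x^{\infty}t^{-\alpha n-1}\,dt=\frac{x^{-\alpha n}}{\alpha n}
\end{equation*}
together with the identity $\Gamma(\alpha n+1)=\alpha n\,\Gamma(\alpha n)$. The factor $\alpha n$ cancels and the sum collapses to exactly $G_N^{\infty}(x,\alpha,\theta)$ as defined in (\ref{eq:G_NInf}). The second contribution I would simply \emph{define} to be $\mathcal{R}_N^{\infty}(x,\alpha,\theta)=\int_x^{\infty}R_N^{\infty}(t,\alpha,\theta)\,dt$, so that the pieces combine into (\ref{eq:G(x)}) with $\sign(x)=1$. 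Its size is controlled by integrating the pointwise bound of Theorem~\ref{theor:pdfExpansion}: using the elementary integrals $\int_x^{\infty}t^{-\alpha N-1}\,dt=x^{-\alpha N}/(\alpha N)$ and $\int_x^{\infty}t^{-\alpha(N+1)-1}\,dt=x^{-\alpha(N+1)}/(\alpha(N+1))$, and again cancelling via $\Gamma(\alpha N+1)=\alpha N\,\Gamma(\alpha N)$ and $\Gamma(\alpha(N+1)+1)=\alpha(N+1)\,\Gamma(\alpha(N+1))$, yields precisely the estimate (\ref{eq:RG_NInf}).

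Finally, for $x<0$ I would apply the inversion formula (\ref{eq:InversionFormula}) in the form $G(x,\alpha,\theta)=G(-|x|,\alpha,\theta)=1-G(|x|,\alpha,-\theta)$ and substitute the already-proved expansion for the positive argument $|x|$ with $\theta$ replaced by $-\theta$; a short cancellation of the two constant terms reproduces (\ref{eq:G(x)}) with $\sign(x)=-1$ and $\theta^*=-\theta$, unifying both cases. The only point requiring care—rather than a genuine obstacle—is the legitimacy of passing from the pointwise density expansion to the integrated one: one must check that $\int_x^{\infty}R_N^{\infty}(t,\alpha,\theta)\,dt$ converges, which holds because the bound decays like $t^{-\alpha N-1}$ and is integrable at infinity, and that the tail integral of $g_N^{\infty}$ converges term by term, which holds since each surviving exponent $-\alpha n-1$ is strictly less than $-1$. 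Everything else reduces to the bookkeeping of signs, the vanishing of the $n=0$ term, and the repeated use of $\Gamma(s+1)=s\,\Gamma(s)$.
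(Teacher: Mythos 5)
Your proposal is correct, and the skeleton coincides with the paper's: write $G^{(+)}(x,\alpha,\theta)=1-\int_x^\infty g(t,\alpha,\theta)\,dt$, substitute the splitting from Theorem~\ref{theor:pdfExpansion}, integrate the finite sum term by term (noting the vanishing $n=0$ term and using $\Gamma(\alpha n+1)=\alpha n\,\Gamma(\alpha n)$), and recover $x<0$ via (\ref{eq:InversionFormula}). Where you genuinely diverge is the remainder estimate. You obtain (\ref{eq:RG_NInf}) in one line by integrating the pointwise bound of Theorem~\ref{theor:pdfExpansion} over $[x,\infty)$ and applying $\Gamma(s+1)=s\Gamma(s)$ twice; the paper instead returns to the integral representation (\ref{eq:R_NInf_0}) of $R_N^\infty$ in Lagrange form, interchanges the order of the $\xi$- and $\tau$-integrations, truncates the Taylor expansion of $\exp\{-(i\tau/\xi)^\alpha\zeta\}$ to first order, sets $\zeta=1$, and evaluates the resulting $\tau$-integrals with the gamma-function formula (\ref{eq:Gamma_intRepr1}). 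Both routes land on exactly the same bound. Yours is shorter and avoids the explicit approximations in the paper's chain of inequalities, but it silently inherits two things that the paper's longer derivation makes visible: the restriction $\theta\neq\pm1$ (in the paper it arises because $\beta=\mp\tfrac{\pi}{2}\theta=\pm\tfrac{\pi}{2}$ falls outside the admissible range of (\ref{eq:Gamma_intRepr1}) when $\Re\gamma\geqslant1$), and the asymptotic caveat that the density bound is only valid in the regime $\tau/\xi\to0$, which must hold uniformly for all $\xi\geqslant x$. Since you already flag the convergence of the tail integrals and take the exclusion $\theta\neq\pm1$ from Theorem~\ref{theor:pdfExpansion}, there is no gap; your argument is an admissible and arguably cleaner proof of the same statement.
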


\begin{proof}
To prove it we will use theorem~\ref{theor:pdfExpansion}. Without loss of generality, we will consider the case $x>0$. The case $x<0$ can be obtained using the inversion property (\ref{eq:InversionFormula}). By definition, at $x>0$ the distribution function has the form
\begin{equation}\label{eq:G(+)}
  G^{(+)}(x,\alpha,\theta)=1-\int_{x}^{\infty} g(\xi,\alpha,\theta)d\xi,\quad x>0,
\end{equation}
where $g(x,\alpha,\theta)$  is the probability density of a strictly stable law and the superscript ``$(+)$" shows that this expression determines the distribution function on the positive part of the semiaxis.

Thus, the expansion of the distribution function in a series is determined by the expansion of the probability density in a series. It is known that the expansion of any function in a Taylor series consists of the $N$-th partial sum and the remainder term. Consequently, the expansion of the probability density $g(x,\alpha,\theta)$ can be written in the form

\begin{equation}\label{eq:g_expan_x>0}
    g(x,\alpha,\theta)=g_N^{\infty}(x,\alpha,\theta)+R_N^{\infty}(x,\alpha,\theta),\quad x>0,
  \end{equation}
where $g_N^\infty(x,\alpha,\theta)$ is the $N$-th partial sum and $R_N^\infty(x,\alpha,\theta)$ is the remainder term of the series. In the case $x\to\infty$ the first summand is determined by the expression (\ref{eq:g_NInf}) and for the remainder term, we use the expression obtained in the article \cite{Saenko2023}
  \begin{equation}\label{eq:R_NInf_0}
    R_N^\infty(x,\alpha,\theta) = \frac{1}{\pi x}\Re ie^{-i\frac{\pi}{2}\theta}\int_{0}^{\infty} \exp\left\{-\tau e^{-i\frac{\pi}{2}\theta}\right\} R_N\left(-\left(\tfrac{i\tau}{x}\right)^\alpha\right) d\tau, \quad x>0,
  \end{equation}
where $R_N(y)=\frac{y^N}{N!}e^{y\zeta}$, $(0<\zeta<1)$  is the remainder term in the Lagrange form.

It should be noted that in the case of $x\to\infty$ the expression (\ref{eq:g_expan_x>0}), as well as theorem~\ref{theor:pdfExpansion} and the expression~(\ref{eq:R_NInf_0}) are valid when the condition  $\tau/x\to0$ is satisfied, where $\tau$ is the integration variable in the inverse Fourier transform formula. In particular, the integration variable in the formula (\ref{eq:R_NInf_0}). See the paper \cite{Saenko2023} for detail. Hence, here, and further in the text, we will assume everywhere that $\tau/x\to0$.

To obtain the expansion of the distribution function in a power series at $x\to\infty$ we will substitute the expression (\ref{eq:g_expan_x>0}) in the expression (\ref{eq:G(+)}). As a result, we get
\begin{equation}\label{eq:G_tmp1}
  G^{(+)}(x,\alpha,\theta)=1-\int_{x}^{\infty}g_N^{\infty}(\xi,\alpha,\theta)d\xi-\int_{x}^{\infty}R_N^\infty(x,\alpha,\theta)d\xi,\quad x>0,
\end{equation}
where $g_N^\infty(x,\alpha,\theta)$ has the form (\ref{eq:g_NInf}), and $R(x,\alpha,\theta)$ is determined by the expression (\ref{eq:R_NInf_0}).

Interchanging the order of integration and summation in the second summand, we obtain
\begin{multline*}
  G_N^\infty(x,\alpha,\theta)=\int_{x}^{\infty}g_N^{\infty}(\xi,\alpha,\theta)d\xi
  =\frac{1}{\pi}\sum_{n=0}^{N-1}\frac{(-1)^{n+1}}{n!}\Gamma(\alpha n+1)\sin\left(\tfrac{\pi}{2}\alpha n(1+\theta)\right) \int_{x}^{\infty}\xi^{-\alpha n-1}d\xi=\\
  =\frac{1}{\pi}\sum_{n=0}^{N-1}\frac{(-1)^{n+1}}{n!}\Gamma(\alpha n)\sin\left(\tfrac{\pi}{2}\alpha n(1+\theta)\right) x^{-\alpha n}, \quad x>0.
\end{multline*}

We should pay attention that at $n=0$ the correspondent summand in the sum is equal to zero. Therefore, the summation can be started with $n=1$. As a result, we come to the expression (\ref{eq:G_NInf}).

Now we will obtain an expression for the remainder term.  Using the expression (\ref{eq:R_NInf_0}) and changing the order of integration in some places, for the third summand in (\ref{eq:G_tmp1}) we obtain
\begin{multline}\label{eq:R_NInf_tmp0}
  \mathcal{R}_N^{\infty}(x,\alpha,\theta)=\int_{x}^{\infty}R_N^{\infty}(\xi,\alpha,\theta)d\xi=
  \frac{1}{\pi}\Re i e^{-\frac{\pi}{2}\theta}\int_{x}^{\infty}\frac{d\xi}{\xi}\int_{0}^{\infty}\exp\left\{-\tau e^{-\frac{\pi}{2}\theta}\right\} R_N\left(-\left(\frac{i\tau}{\xi}\right)^\alpha\right)d\tau=\\
  = \frac{1}{\pi}\Re i e^{-\frac{\pi}{2}\theta}\int_{x}^{\infty}\frac{d\xi}{\xi}\int_{0}^{\infty} \exp\left\{-\tau e^{-i\frac{\pi}{2}\theta}\right\}\frac{1}{N!}\left(-\left(\frac{i\tau}{\xi}\right)^\alpha\right)^N \exp\left\{-\left(\frac{i\tau}{\xi}\right)^\alpha\zeta\right\}d\xi=\\
  =\frac{1}{\pi N!}\Re i e^{-\frac{\pi}{2}\theta}\int_{0}^{\infty}\exp\left\{-\tau e^{-i\frac{\pi}{2}\theta}\right\} (-(i\tau)^\alpha)^N d\tau \int_{x}^{\infty}\xi^{-\alpha N-1}\exp\left\{-\left(\frac{i\tau}{\xi}\right)^\alpha\zeta\right\}d\xi
\end{multline}

Unfortunately, we cannot calculate the integral since the exact value of the variable $\zeta$ is unknown. It is only known that this variable takes values from the interval $0<\zeta<1$. Nevertheless, it is possible to estimate the value of this integral. We will consider $|\mathcal{R}_N^{\infty}(x,\alpha,\theta)|$. Taking into account that the case $\tau/x\to0$ is being considered, we can expand the multiplier $\exp\left\{-\left(\tfrac{i\tau}{\xi}\right)^\alpha\zeta\right\}$ in a Taylor series and leave only the summands of the first order of smallness. We have
\begin{equation*}
 \exp\left\{-\left(\frac{i\tau}{x}\right)^\alpha\zeta\right\} =\sum_{k=0}^{\infty}\frac{(-1)^k}{k!}\left(\left(\frac{i\tau}{x}\right)^\alpha\zeta\right)^k\approx1-\zeta\left(\frac{i\tau}{x}\right)^\alpha
\end{equation*}
To calculate the obtained integral we will also need the following formula given in \cite{Bateman_V1_1953} (see \S 1.5, the formula (31))
  \begin{equation*}
   \int_{0}^{\infty}t^{\gamma-1}e^{-ct\cos\beta-ict\sin\beta}dt=\Gamma(\gamma)c^{-\gamma}e^{-i\gamma\beta},\
   -\frac{\pi}{2}<\beta<\frac{\pi}{2},\ \Re\gamma>0\ \text{or}\ \beta=\pm\frac{\pi}{2},\ 0<\Re\gamma<1.
  \end{equation*}
If we use the Euler formula $\cos\beta+i\sin\beta=e^{i\beta}$, then this integral can be represented in the form
  \begin{equation}\label{eq:Gamma_intRepr1}
   \int_{0}^{\infty}t^{\gamma-1}e^{-ct\exp\{i\beta\}}dt=\Gamma(\gamma)c^{-\gamma}e^{-i\gamma\beta},\quad
   -\frac{\pi}{2}<\beta<\frac{\pi}{2},\ \Re\gamma>0\ \text{or}\ \beta=\pm\frac{\pi}{2},\ 0<\Re\gamma<1.
  \end{equation}
Taking into account that $x>0$ for (\ref{eq:R_NInf_tmp0}) the following estimates turn out to be valid
\begin{multline*}
  |\mathcal{R}_N^{\infty}(x,\alpha,\theta)| =\frac{1}{\pi N!}\left|\Re i e^{-\frac{\pi}{2}\theta}\int_{0}^{\infty}\exp\left\{-\tau e^{-i\frac{\pi}{2}\theta}\right\} (-(i\tau)^\alpha)^N d\tau \int_{x}^{\infty}\xi^{-\alpha N-1}\exp\left\{-\left(\frac{i\tau}{\xi}\right)^\alpha\zeta\right\}d\xi\right|\\
  \leqslant \frac{1}{\pi N!}\left| \int_{0}^{\infty}\exp\left\{-\tau e^{-i\frac{\pi}{2}\theta}\right\} (-(i\tau)^\alpha)^N d\tau \int_{x}^{\infty} \xi^{-\alpha N-1}\left(1-(i\tau)^\alpha\xi^{-\alpha}\zeta\right)d\xi\right|  =\\
  = \frac{1}{\pi N!}\left| \int_{0}^{\infty}\exp\left\{-\tau e^{-i\frac{\pi}{2}\theta}\right\} (-(i\tau)^\alpha)^N \left(\frac{x^{-\alpha N}}{\alpha N} -(i\tau)^\alpha\frac{\zeta x^{-\alpha(N+1)}}{\alpha(N+1)} \right) d\tau \right|\leqslant\\
  \leqslant\frac{1}{\pi N!}\frac{x^{-\alpha N}}{\alpha N}\left|\int_{0}^{\infty}\exp\left\{-\tau e^{-i\frac{\pi}{2}\theta}\right\}\tau^{\alpha N}d\tau\right|+
  \frac{1}{\pi N!}\frac{\zeta x^{-\alpha(N+1)}}{\alpha(N+1)} \left|\int_{0}^{\infty}\exp\left\{-\tau e^{-i\frac{\pi}{2}\theta}\right\}\tau^{\alpha (N+1)}d\tau\right|=\\
  = \frac{x^{-\alpha N}}{\pi N!}\left(\frac{\Gamma(\alpha N+1)}{\alpha N}+\zeta x^{-\alpha}\frac{\Gamma(\alpha(N+1)+1)}{\alpha(N+1)}\right)\leqslant
  \frac{x^{-\alpha N}}{\pi N!}\left(\Gamma(\alpha N)+x^{-\alpha}\Gamma(\alpha(N+1))\right).
\end{multline*}
Here, when passing in the last equality, the formula (\ref{eq:Gamma_intRepr1}) was used to calculate the integrals, and when passing in the last inequality, it was assumed that $\zeta=1$.

To substantiate the validity of using the formula (\ref{eq:Gamma_intRepr1}) when calculating the integrals in this expression, we examine the range of the argument $-\tfrac{\pi}{2}\theta$. The range of admissible values for the parameter $\theta$ is determined by the inequality $|\theta|\leqslant\min(1,2/\alpha-1)$. From this it follows that if $\alpha\leqslant1$, then $-1\leqslant\theta\leqslant1$, and if $1<\alpha\leqslant2$, then $-(2/\alpha-1)\leqslant\theta\leqslant2/\alpha-1$. Thus, for any $0<\alpha\leqslant2$ we obtain  $ -\tfrac{\pi}{2}\leqslant-\tfrac{\pi}{2}\theta\leqslant\tfrac{\pi}{2}$. The extreme values of this interval $\pm\tfrac{\pi}{2}$ are attained at the values $\alpha\leqslant1$ and $\theta=\mp1$. Now we will compare the integral in (\ref{eq:Gamma_intRepr1}) with the integrals in the expression during the passage in the last equality. We see that the integral (\ref{eq:Gamma_intRepr1})  coincides with these integrals except for the case $-\tfrac{\pi}{2}\theta=\pm\tfrac{\pi}{2}$. These two points are out of the range of admissible values  for the argument $\beta$ in the formula (\ref{eq:Gamma_intRepr1}). Therefore, they should be excluded from consideration.

Now getting back to (\ref{eq:G_tmp1}), we obtain
\begin{equation}\label{eq:G(+)_1}
  G^{(+)}(x,\alpha,\theta)=1-G_N^{\infty}(x,\alpha,\theta)-\mathcal{R}_N^\infty(x,\alpha,\theta),\quad x>0,
\end{equation}
where for $\mathcal{R}_N^\infty(x,\alpha,\theta)$ the estimate is valid
\begin{equation*}
  \left|\mathcal{R}_N^\infty(x,\alpha,\theta)\right|\leqslant\frac{x^{-\alpha N}}{\pi N!}\left(\Gamma(\alpha N)+x^{-\alpha}\Gamma(\alpha(N+1))\right),\quad x>0.
\end{equation*}

Since the case $x>0$ has been considered so far, these expressions are valid at $x>0$. To obtain the expansion of the distribution function at $x<0$ we will use the inversion property. Using in the formula (\ref{eq:InversionFormula}) the expression (\ref{eq:G(+)_1}) we obtain
\begin{equation*}
  G^{(-)}(-x,\alpha,\theta)=G_N^{\infty}(x,\alpha,-\theta)+\mathcal{R}_N^\infty(x,\alpha,-\theta),\quad x>0.
\end{equation*}
If we now introduce the notation $\theta^*=\theta\sign(x)$ and take the coordinate $x$ in absolute value, then we can combine the formulas for $G^{(+)}(x,\alpha,\theta)$ and $G^{(-)}(x,\alpha,\theta)$ into one formula. As a result, we obtain the expression (\ref{eq:G(x)}). Thus, the theorem is proved.
\begin{flushright}
  $\Box$
\end{flushright}
\end{proof}

The proved theorem determines the expansion of the distribution function of a strictly stable law with characteristic function (\ref{eq:CF_formC}) into a power series at $x\to\infty$. Now we examine the issue of the convergence of the obtained expansion. Since this expansion was obtained by integrating the expansion for the probability density, taking into account the results of Corollary 1, proved in the paper \cite{Saenko2023},  one can state that this series converges in the case of $\alpha<1$ for all $x$, in the case $\alpha=1$, only for $|x|>1$, and in the case $\alpha>1$ the series is asymptotic one at  $x\to\infty$. A more precise formulation is given by the following corollary.

\begin{corollary}\label{corol:cdfConverg}
  In the case $\alpha<1$ the series (\ref{eq:G_NInf}) converges for any $x$ at $N\to\infty$. In this case for the distribution function $G(x,\alpha,\theta)$  for any admissible $\theta$ the representation is valid in the form of an infinite series
      \begin{equation}\label{eq:G_a<1}
        G(x,\alpha,\theta)=\tfrac{1}{2}(1+\sign(x))-\frac{\sign(x)}{\pi}\sum_{n=1}^{\infty}\frac{(-1)^{n+1}}{n!}\Gamma(\alpha n) \sin\left(\tfrac{\pi}{2}\alpha n(1+\theta^*)\right)|x|^{-\alpha n}.
      \end{equation}
   In the case $\alpha=1$ the series (\ref{eq:G_NInf}) converges for the values $|x|>1$ at $N\to\infty$. In this case the representation is valid in the form of an infinite series for the distribution function $G(x,1,\theta)$ for any admissible $\theta$
      \begin{equation}\label{eq:G_a=1}
        G(x,1,\theta)=\tfrac{1}{2}(1+\sign(x))-\frac{\sign(x)}{\pi}\sum_{n=1}^{\infty}\frac{(-1)^{n+1}}{n} \sin\left(\tfrac{\pi}{2}n(1+\theta^*)\right)|x|^{-n},\quad |x|>1.
      \end{equation}
  In the case $\alpha>1$ the series (\ref{eq:G_NInf}) diverges for any $x$ at $N\to\infty$. In this case the asymptotic expansion is valid for the distribution function $G(x,\alpha,\theta)$ for any admissible $\theta$
      \begin{equation}\label{eq:G_a>1}
        G(x,\alpha,\theta)\sim\tfrac{1}{2}(1+\sign(x))-\frac{\sign(x)}{\pi}\sum_{n=1}^{N-1}\frac{(-1)^{n+1}}{n!}\Gamma(\alpha n) \sin\left(\tfrac{\pi}{2}\alpha n(1+\theta^*)\right)|x|^{-\alpha n},\quad x\to\pm\infty.
      \end{equation}
Here, everywhere $\theta^*=\theta\sign(x)$.
\end{corollary}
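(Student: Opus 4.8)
The plan is to read off the convergence behaviour from two ingredients already in hand: a term-size test applied to the series $G_N^\infty(x,\alpha,\theta)$ of (\ref{eq:G_NInf}), and the remainder bound (\ref{eq:RG_NInf}). The decisive observation is that Theorem~\ref{theor:cdfExpansion} furnishes the \emph{exact} identity $G(x,\alpha,\theta)=\tfrac12(1+\sign x)-\sign x\bigl(G_N^\infty(|x|,\alpha,\theta^*)+\mathcal R_N^\infty(|x|,\alpha,\theta^*)\bigr)$ for every $N$; hence, as soon as I establish $\mathcal R_N^\infty\to0$, the partial sums $G_N^\infty$ must converge to the distribution function itself, and the infinite-series formulas (\ref{eq:G_a<1}) and (\ref{eq:G_a=1}) follow immediately upon letting $N\to\infty$. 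The sign bookkeeping through $\theta^*=\theta\sign x$ is then inherited verbatim from the theorem.

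First I would fix the radius of convergence. Writing the general term of (\ref{eq:G_NInf}) as $a_n$ and using $|\sin(\tfrac\pi2\alpha n(1+\theta))|\leqslant1$, I get $|a_n|\leqslant \tfrac1\pi\,\Gamma(\alpha n)\,x^{-\alpha n}/n!$, so the root test reduces everything to the growth of $\Gamma(\alpha n)/n!$. Stirling's formula gives $\tfrac1n\ln\bigl(\Gamma(\alpha n)/n!\bigr)=(\alpha-1)\ln n+\alpha\ln\alpha+(1-\alpha)+o(1)$, whence $(\Gamma(\alpha n)/n!)^{1/n}\to0$ for $\alpha<1$, $\to1$ for $\alpha=1$, and $\to\infty$ for $\alpha>1$. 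Thus $\limsup_n|a_n|^{1/n}$ equals $0$, $x^{-1}$, and $\infty$ in the three cases, which yields convergence for every $x$ when $\alpha<1$, convergence exactly for $|x|>1$ when $\alpha=1$, and divergence for every $x$ when $\alpha>1$. In the last case, for the generic parameters at which the sine factor does not vanish identically, infinitely many $|a_n|$ blow up and the terms fail to tend to zero; the lone degenerate boundary value $\theta=2/\alpha-1$, where $\sin(\tfrac\pi2\alpha n(1+\theta))=\sin(\pi n)\equiv0$, makes the series terminate trivially and corresponds to super-polynomial tail decay (as for the Gaussian, $\alpha=2,\theta=0$), so it must be flagged separately.

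Next I would dispose of the remainder in the two convergent regimes. For $\alpha<1$ the same Stirling estimate shows that both $x^{-\alpha N}\Gamma(\alpha N)/N!$ and $x^{-\alpha(N+1)}\Gamma(\alpha(N+1))/N!$ tend to $0$ super-exponentially for any fixed $x>0$, so (\ref{eq:RG_NInf}) forces $\mathcal R_N^\infty\to0$ and (\ref{eq:G_a<1}) follows. For $\alpha=1$ the bound collapses to the explicit form $|\mathcal R_N^\infty(x,1,\theta)|\leqslant \tfrac{x^{-N}}{\pi}\bigl(\tfrac1N+x^{-1}\bigr)$ after using $\Gamma(N)/N!=1/N$ and $\Gamma(N+1)/N!=1$; this tends to $0$ precisely when $x>1$, matching the radius found above and delivering (\ref{eq:G_a=1}). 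Finally, for $\alpha>1$ I would record the asymptotic statement: for each fixed $N$, the bound (\ref{eq:RG_NInf}) is $O(x^{-\alpha N})$ as $x\to\infty$, i.e.\ of the order of the first discarded term $a_N$, so the divergent series (\ref{eq:G_a>1}) is asymptotic to $G$ in the Poincaré sense.

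The main obstacle I anticipate is analytic rather than conceptual: making the Stirling comparison of $\Gamma(\alpha n)$ against $n!$ sharp enough that the three regimes separate cleanly at the borderline $\alpha=1$, and ensuring that the occasional vanishing of $\sin(\tfrac\pi2\alpha n(1+\theta))$ does not corrupt the divergence argument — which is exactly why I favour a root/term test with a $\limsup$ over a naive ratio test, where a spurious zero in the denominator would otherwise stall the computation.
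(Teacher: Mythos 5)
Your proposal follows essentially the same route as the paper's own proof: the root (Cauchy) test with Stirling's formula applied to $\Gamma(\alpha n)/n!$ to separate the three regimes, the remainder bound (\ref{eq:RG_NInf}) driven to zero as $N\to\infty$ in the convergent cases, the inversion property for $x<0$, and the $O(x^{-\alpha N})$ estimate at fixed $N$ for the asymptotic statement when $\alpha>1$. If anything you are slightly more careful than the paper, which only bounds the terms from above via $|\sin(\cdot)|\leqslant1$ and so does not strictly establish divergence for $\alpha>1$; your remark about the degenerate value $\theta=2/\alpha-1$ (e.g.\ the Gaussian), where the sine factor vanishes identically and the series terminates, is a legitimate caveat the paper omits.
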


\begin{proof}
Without loss of generality, we first consider the case $x>0$. The expansion for the case $x<0$ will be obtained using the inversion property (\ref{eq:InversionFormula}). It was previously obtained that at positive $x$ the representation (\ref{eq:G(+)_1}) is valid. From this expression and also from (\ref{eq:RG_NInf}) it follows that
\begin{equation}\label{eq:G(+)-GN}
  |G^{(+)}(x,\alpha,\theta)-1+G_N^\infty(x,\alpha,\theta)|\leqslant \frac{x^{-\alpha N}}{\pi N!}\left(\Gamma(\alpha N)+x^{-\alpha}\Gamma(\alpha(N+1))\right),\quad x>0.
\end{equation}
We examine the convergence of the series (\ref{eq:G_NInf}). Since this series is sign-alternating, the inequalities are valid
\begin{multline*}
  G_N^\infty(x,\alpha,\theta)\leqslant|G_N^\infty(x,\alpha,\theta)|\leqslant
  \frac{1}{\pi} \sum_{n=1}^{N-1}\left|\frac{(-1)^{n+1}}{n!}\Gamma(\alpha n)\sin\left(\tfrac{\pi}{2}\alpha n(1+\theta)\right) x^{-\alpha n}\right| \\
  \leqslant\frac{1}{\pi}\sum_{n=1}^{N-1}\frac{\Gamma(\alpha n)}{\Gamma(n+1)}x^{-\alpha n},\quad x>0.
\end{multline*}
We will make use of the Cauchy criterion in the limiting form and the Stirling formula
\begin{equation}\label{eq:Stirling}
      \Gamma(z)\sim e^{-z}z^{z-\frac{1}{2}}\sqrt{2\pi},\quad z\to\infty, |\arg z|<\pi.
\end{equation}
As a result, we obtain
\begin{multline}\label{eq:SeriesTermLim}
  \lim_{n\to\infty}\left(\frac{1}{\pi}\frac{\Gamma(\alpha n)}{\Gamma(n+1)}x^{-\alpha n}\right)^{1/n}
  =\lim_{n\to\infty}\left(\frac{e^{-\alpha n} (\alpha n)^{\alpha n-1/2}\sqrt{2\pi} x^{-\alpha n}}{\pi e^{-n-1}(n+1)^{n+1-1/2}\sqrt{2\pi}}\right)^{1/n}
  =\lim_{n\to\infty}\frac{e^{-\alpha}(\alpha n)^{\alpha-\frac{1}{2n}}x^{-\alpha}}{\pi^{\frac{1}{n}}e^{-1-\frac{1}{n}} (n+1)^{1+\frac{1}{2n}}}\\
  = e^{1-\alpha}\alpha^\alpha x^{-\alpha} \lim_{n\to\infty}n^\alpha (n+1)^{-1}= e^{1-\alpha}\alpha^\alpha x^{-\alpha} \lim_{n\to\infty}n^{\alpha-1}=\begin{cases}
                                  0, & \mbox{if}\ \alpha<1, \\
                                  x^{-1}, & \mbox{if}\ \alpha=1, \\
                                  \infty, & \mbox{if}\ \alpha>1.
                                \end{cases}
\end{multline}
From this it is clear that in the case $\alpha<1$ the series (\ref{eq:G_NInf}) is convergent for any $x$, in the case $\alpha=1$ this series converges at $x>1$ and diverges at $x\leqslant1$. In the case $\alpha>1$ this series diverges for any $x$.

We examine the behavior of the remainder term (\ref{eq:RG_NInf}) in the case $N\to\infty$. Using the Stirling’s formula
 (\ref{eq:Stirling}) and taking into account that $N+1\approx N$ at $N\to\infty$ we obtain
\begin{multline}\label{eq:RN_lim}
 \lim_{N\to\infty} \mathcal{R}_N^\infty(x,\alpha,\theta)\leqslant \lim_{N\to\infty} \frac{x^{-\alpha N}}{\pi N!}\left(\Gamma(\alpha N)+x^{-\alpha}\Gamma(\alpha(N+1))\right)\\
  =\frac{1}{\pi}\lim_{N\to\infty} \frac{x^{-\alpha N}\Gamma(\alpha N)+x^{-\alpha(N+1)}\Gamma(\alpha(N+1))}{\Gamma(N+1)}
  \approx \frac{2}{\pi}\lim_{N\to\infty} x^{-\alpha N}\frac{\Gamma(\alpha N)}{\Gamma(N)}\\
  = \frac{2}{\pi}\lim_{N\to\infty}x^{-\alpha N}\frac{e^{-\alpha N} (\alpha N)^{\alpha N-1/2} \sqrt{2\pi}} {e^{-N}N^{N-1/2}\sqrt{2\pi}}
  =\frac{2}{\pi}\lim_{N\to\infty} x^{-\alpha N} e^{N(1-\alpha)}\alpha^{\alpha N-1/2} N^{N(\alpha-1)}\\
  =\frac{2}{\pi\sqrt{\alpha}}\lim_{N\to\infty}\exp\left\{N(1-\alpha)(1-\ln N)+\alpha N(\ln\alpha-\ln x)\right\}
  =\begin{cases}
     0, & \mbox{if }\alpha<1 \\
     \infty, & \mbox{if } \alpha=1, x\leqslant1,\\
     0, &\mbox{if }\alpha=1, x>1,\\
     \infty, & \mbox{if } \alpha>1.
   \end{cases}
\end{multline}

We will consider the case $\alpha<1$. Generalizing the results obtained, we see that in this case the series (\ref{eq:G_NInf}) is convergent, and the limit of the remainder term $\mathcal{R}_N^\infty(x,\alpha,\theta)$ is equal to zero. From this it follows that the right part  (\ref{eq:G(+)-GN}) is an element of an infinitesimal sequence. In turn, this means that for any fixed $x$ the sequence $1-G_N^\infty(x,\alpha,\theta)$ converges to the distribution function $G^{(+)}(x,\alpha,\theta)$ at $N\to\infty$. Consequently, in the case $\alpha<1$, for the distribution function $G^{(+)}(x,\alpha,\theta)$ the representation is valid in the form of an infinite series
\begin{equation*}
  G^{(+)}(x,\alpha,\theta)=1-\frac{1}{\pi}\sum_{n=1}^{\infty}\frac{(-1)^{n+1}}{n!}\Gamma(\alpha n)\sin(\tfrac{\pi}{2}\alpha n(1+\theta))x^{-\alpha n},\quad x>0
\end{equation*}

Using the inversion property (\ref{eq:InversionFormula}) for negative $x$ we obtain
\begin{equation*}
  G^{(-)}(x,\alpha,\theta)=   \frac{1}{\pi}\sum_{n=1}^{\infty}\frac{(-1)^{n+1}}{n!}\Gamma(\alpha n)\sin(\tfrac{\pi}{2}\alpha n(1-\theta))(-x)^{-\alpha n},\quad x<0.
\end{equation*}
 If now we introduce the parameter $\theta^*=\theta\sign(x)$ and take the coordinate $x$ in absolute value, then we can combine the last two formulas. As a result, we get the formula (\ref{eq:G_a<1}). This proves the first item of the corollary.

Now we consider the case $\alpha=1$ and $x>1$. As it follows from the expression (\ref{eq:SeriesTermLim}), in this case the series (\ref{eq:G_NInf}) is convergent. It also follows from the expression (\ref{eq:RN_lim}) that $\lim_{N\to\infty}\mathcal{R}_N^\infty(x,\alpha,\theta)=0$. Therefore, the right part of the expression (\ref{eq:G(+)-GN}) is an element of an infinitesimal sequence. In turn, this means that for any fixed $x>1$ the sequence $1-G_N^\infty(x,1,\theta)$ converges to the distribution function $G^{(+)}(x,1,\theta)$. Therefore, the representation in the form of an infinite series is valid for the distribution function $G^{(+)}(x,1,\theta)$
\begin{equation}\label{eq:G(+)_x_expan}
  G^{(+)}(x,1,\theta)=1-\frac{1}{\pi}\sum_{n=1}^{\infty}\frac{(-1)^{n+1}}{n}\sin\left(\tfrac{\pi}{2}n(1+\theta)\right)x^{-n},\quad x>1.
\end{equation}

Using the formula (\ref{eq:InversionFormula}) for negative $x$ we obtain the formula
\begin{equation*}
  G^{(-)}(x,1,\theta)=\frac{1}{\pi}\sum_{n=1}^{\infty}\frac{(-1)^{n+1}}{n}\sin\left(\tfrac{\pi}{2}n(1-\theta)\right)(-x)^{-n},\quad x<-1.
\end{equation*}
    Using now the notation $\theta^*=\theta\sign(x)$ and taking the coordinate $x$ in absolute value we can combine the last two formulas in one expression. As a result, we obtain the expression (\ref{eq:G_a=1}). This proves the second item of the corollary.

Now we consider the case $\alpha>1$. From the expression (\ref{eq:SeriesTermLim}) it follows that in this case the series (\ref{eq:G_NInf}) is divergent at $N\to\infty$. However, from the expression (\ref{eq:RG_NInf}) it is clear that at some fixed  $N$  the estimate is valid
\begin{equation*}
  \mathcal{R}_N^\infty(x,\alpha,\theta)=O(x^{-\alpha N}),\quad x\to\infty.
\end{equation*}
Thus, for each fixed $N$ at $x>0$ from the expression (\ref{eq:G(+)_1}) we obtain
\begin{equation*}
  G^{(+)}(x,\alpha,\theta)=1-\frac{1}{\pi}\sum_{n=1}^{N-1}\frac{(-1)^{n+1}}{n!}\Gamma(\alpha n)\sin(\tfrac{\pi}{2}\alpha n(1+\theta))x^{-\alpha n}+O\left(x^{-\alpha N}\right),\quad x\to\infty.
\end{equation*}
Using the formula (\ref{eq:InversionFormula}) we obtain the representation for negative $x$
\begin{equation*}
  G^{(-)}(x,\alpha,\theta)=\frac{1}{\pi}\sum_{n=1}^{N-1}\frac{(-1)^{n+1}}{n!}\Gamma(\alpha n)\sin(\tfrac{\pi}{2}\alpha n(1-\theta))(-x)^{-\alpha n}+O\left((-x)^{-\alpha N}\right),\quad x\to-\infty.
\end{equation*}
Introducing now the notation $\theta^*=\theta\sign(x)$ and taking the coordinate $x$ in absolute value we can combine the last two expressions into one. As a result, we obtain
\begin{equation*}
  G(x,\alpha,\theta)=\tfrac{1}{2}(1+\sign(x))-\frac{\sign(x)}{\pi}\sum_{n=1}^{N-1}\frac{(-1)^{n+1}}{n!}\Gamma(\alpha n)\sin(\tfrac{\pi}{2}\alpha n(1+\theta^*))|x|^{-\alpha n}+O\left(|x|^{-\alpha N}\right),\quad x\to\pm\infty.
\end{equation*}
Thus, we have obtained the definition of an asymptotic series.  Therefore, this expression can be written as (\ref{eq:G_a>1}). This proves the third item of the corollary.
It should be noted that we do not consider the case $\alpha=1$ and $|x|<1$ since in this case the series (\ref{eq:G_NInf}) diverges. Thus, the corollary is completely proved.
\begin{flushright}
  $\Box$
\end{flushright}

\end{proof}
It should be noted that the case $\alpha=1$ is related to one of those few cases when both the probability density and the distribution function are expressed in terms of elementary functions. In this case, the distribution function is expressed by the formula (\ref{eq:G(x)_a=1}). The derivation of this formula, as well as the proof of corollary~\ref{corol:SSL_cdf} (see Appendix~\ref{sec:IntRepr})  can be found in the paper\cite{Saenko2020b}. In the article \cite{Saenko2023} it was shown that the expansion of the probability density in a series in the case  $\alpha=1$ at $x\to\infty$  converges to the density $g(x,1,\theta)=\frac{\cos(\pi\theta/2)}{\pi(x^2-2x\sin(\pi\theta/2)+1)}$ at $N\to\infty$. Similarly, for the distribution function one can show that the expansion (\ref{eq:G_a=1}) converges to the distribution function (\ref{eq:G(x)_a=1}) at $|x|>1$. We formulate this result in the form of a remark.
\begin{remark}
In the case $\alpha=1$ for any $-1<\theta<1$ in the domain $|x|>1$ the series (\ref{eq:G_a=1}) converges to the distribution function (\ref{eq:G(x)_a=1}).
\end{remark}

\begin{proof}
To prove this, we will consider the distribution function (\ref{eq:G(x)_a=1}) and show that the expansion of this distribution function in a Taylor series at $x\to\infty$ has the form (\ref{eq:G_a=1}). Using the reduction formulas $\cos\left(\tfrac{\pi}{2}\theta\right)=\sin\left(\tfrac{\pi}{2}+\tfrac{\pi}{2}\theta\right)$, $\sin\left(\tfrac{\pi}{2}\theta\right)=-\cos\left(\tfrac{\pi}{2}+\tfrac{\pi}{2}\theta\right)$, we will write down the distribution function  (\ref{eq:G(x)_a=1}) in the form
\begin{equation}\label{eq:G(x)_a=1_tmp0}
G(x,1,\theta)=\frac{1}{2}+\frac{1}{\pi}\arctan\left(\frac{x+\cos\left(\tfrac{\pi}{2}(1+\theta)\right)}{\sin\left(\tfrac{\pi}{2}(1+\theta)\right)}\right)
\end{equation}

Further, since we need to obtain the expansion of the distribution function in a Taylor series at  $x\to\infty$, then in this expression we will substitute the variable $x=1/y$ in this expression and start considering the case $x\geqslant0$. The relationship obtained in this way we will denote as $G^{(+)}(y,1,\theta)$. As a result, we obtain
\begin{equation}\label{eq:G(y)+_a=1}  G^{(+)}(y,1,\theta)=\frac{1}{2}+\frac{1}{\pi}\arctan\left(\frac{\tfrac{1}{y}+\cos\left(\tfrac{\pi}{2}(1+\theta)\right)}
  {\sin\left(\tfrac{\pi}{2}(1+\theta)\right)}\right),\quad y\geqslant0.
\end{equation}
Hence it is clear that the behavior of the function (\ref{eq:G(y)+_a=1}) at $y\to0$ corresponds to the behavior of the function (\ref{eq:G(x)_a=1_tmp0}) at $x\to\infty$. Therefore, expanding the function (\ref{eq:G(y)+_a=1}) in a Taylor series in the vicinity of the point $y=0$ and getting back to the variable $x$, we obtain the expansion of the distribution function (\ref{eq:G(x)_a=1_tmp0}) into a power series at $x\to\infty$.

We will take into account that the function $\arctan(x)$ is infinitely differentiable. Consequently, the expansion of the function (\ref{eq:G(y)+_a=1}) in a Taylor series in the vicinity of the point $y=0$ has the form
\begin{equation}\label{eq:G(y)_Teylor_expan}
  G^{(+)}(y,1,\theta)=G^{(+)}(0,1,\theta)+\sum_{n=1}^{\infty}\frac{1}{n!}\left.\frac{d^n G^{(+)}(y,1,\theta)}{dy^n}\right|_{y=0}y^n.
\end{equation}

At the beginning, we will calculate the first derivative of the function (\ref{eq:G(y)+_a=1}). We obtain
\begin{equation*}
  \frac{d G^{(+)}(y,1,\theta)}{dy}=-\frac{\sin\left(\tfrac{\pi}{2}(1+\theta)\right)}
  {\pi\left(y^2+2y\cos\left(\tfrac{\pi}{2}(1+\theta)\right)+1\right)}.
\end{equation*}
We represent this expression in the form
\begin{equation*}
   \frac{d G^{(+)}(y,1,\theta)}{dy}=-\frac{\sin\left(\tfrac{\pi}{2}(1+\theta)\right)}{\pi}f(g(y)),
\end{equation*}
where
\begin{equation}\label{eq:f_g_def}
 f\equiv f(g)=1/g,\quad g\equiv g(y)=y^{2}+2y\cos\left(\tfrac{\pi}{2}(1+\theta)\right)+1.
\end{equation}

Thus, for the derivative of the series of $n$ of the function $G^{(+)}(y,1,\theta)$ we get
\begin{multline*}
  \frac{d^n G^{(+)}(y,1,\theta)}{dy^n}=\frac{d^{n-1}}{dy^{n-1}}\frac{d G^{(+)}(y,1,\theta)}{dy}
  =-\frac{\sin\left(\tfrac{\pi}{2}(1+\theta)\right)}{\pi} \frac{d^{n-1} f(g(y))}{dy^{n-1}}\\
  =-\frac{\sin\left(\tfrac{\pi}{2}(1+\theta)\right)}{\pi} \sum_{k=0}^{\left[\tfrac{n-1}{2}\right]} \frac{(-1)^{n-1-k} (n-1)! (n-1-k)!}{k!(n-1-2k)!}\frac{\left(2y+2\cos\left(\tfrac{\pi}{2}(1+\theta)\right)\right)^{n-1-2k}} {\left(y^2+2y\cos\left(\tfrac{\pi}{2}(1+\theta)\right)+1\right)^{n-k}},
\end{multline*}
 where the formula was used
\begin{equation*}
     \frac{d^n f(g(y))}{dy^n}=\sum_{k=0}^{\left[\tfrac{n}{2}\right]} \frac{(-1)^{n-k} n! (n-k)!}{k!(n-2k)!}\frac{\left(2y+2\cos\left(\tfrac{\pi}{2}(1+\theta)\right)\right)^{n-2k}} {\left(y^2+2y\cos\left(\tfrac{\pi}{2}(1+\theta)\right)+1\right)^{n-k+1}},
\end{equation*}
which was obtained in the article \cite{Saenko2023}.

 At the point $y=0$ this derivative has the value
\begin{multline}\label{eq:G(+)_nder_y=0}
  \left.\frac{d^n G^{(+)}(y,1,\theta)}{dy^n}\right|_{y=0}
  = -\frac{\sin\left(\tfrac{\pi}{2}(1+\theta)\right)}{\pi} (-1)^{n-1} (n-1)!\\
  \times \sum_{k=0}^{\left[\tfrac{n-1}{2}\right]} \frac{(-1)^{k} (n-k-1)!}{k!(n-2k-1)!}\left(2\cos\left(\tfrac{\pi}{2}(1+\theta)\right)\right)^{n-2k-1}
  =\frac{(n-1)!}{\pi}(-1)^{n-1}\sin\left(\tfrac{\pi}{2}n(1+\theta)\right).
\end{multline}
Here it was taken into account that $(-1)^{-k}=(-1)^k$, and also the formula was used for $\sin(n\varphi)$ (see, for example, \cite{Schaum2018})
\begin{equation*}
     \sin(n\varphi)=\sin\varphi\sum_{k=0}^{\left[\frac{n-1}{2}\right]}(-1)^{k}\frac{(n-k-1)!}{k!(n-2k-1)!}(2\cos\varphi)^{n-2k-1}.
\end{equation*}

Now we substitute the expression (\ref{eq:G(+)_nder_y=0}) in the formula (\ref{eq:G(y)_Teylor_expan}) and take into account that  $G^{(+)}(0,1,\theta)=\frac{1}{2}+\arctan(\infty)=1$. As a result, the expression (\ref{eq:G(y)_Teylor_expan}) becomes
\begin{equation*}
  G^{(+)}(y,1,\theta)=1-\frac{1}{\pi}\sum_{n=1}^{\infty}\frac{(-1)^{n-1}}{n}\sin\left(\tfrac{\pi}{2}(1+\theta)\right) y^n.
\end{equation*}
Thus, we have obtained the expansion of the function (\ref{eq:G(y)+_a=1}) in a Taylor series in the vicinity of the point $y=0$.

To obtain the expansion of the distribution function (\ref{eq:G(x)_a=1_tmp0}) at $x\to\infty$, one must return to the variable $x$ in the last expression. By substituting the variable $y=1/x$, we obtain
\begin{equation*}
  G^{(+)}(x,1,\theta)=1-\frac{1}{\pi}\sum_{n=1}^{\infty}\frac{(-1)^{n-1}}{n}\sin\left(\tfrac{\pi}{2}(1+\theta)\right) x^{-n}.
\end{equation*}
This expression is the expansion of the distribution function (\ref{eq:G(x)_a=1_tmp0}) at $x\to\infty$. As one can see, it completely coincides with the expansion obtained earlier (\ref{eq:G(+)_x_expan}). Corollary~\ref{corol:cdfConverg} shows that this series converges at $n\to\infty$ in the domain $x>1$. Hence
\begin{equation*}
  G^{(+)}(x,1,\theta)=1-\frac{1}{\pi}\sum_{n=1}^{\infty}\frac{(-1)^{n-1}}{n}\sin\left(\tfrac{\pi}{2}(1+\theta)\right) x^{-n}, \quad x>1.
\end{equation*}

To obtain the expansion of the distribution function for negative $x$ we use the inversion property and, in particular, the formula (\ref{eq:InversionFormula}). As a result, we get
\begin{equation*}
 G^{(-)}(x,1,\theta)=\frac{1}{\pi}\sum_{n=1}^{\infty}\frac{(-1)^{n+1}}{n}\sin\left(\tfrac{\pi}{2}n(1-\theta)\right)(-x)^{-n},\quad x<-1.
\end{equation*}
If we now introduce the parameter $\theta^*=\theta\sign(x)$ and take the variable $x$ in absolute value then it is possible to combine the formulas for $G^{(+)}(x,1,\theta)$ and  $G^{(-)}(x,1,\theta)$ into one formula. As a result, we obtain
\begin{equation*}
  G(x,1,\theta)=\tfrac{1}{2}(1+\sign(x))-\frac{\sign(x)}{\pi}\sum_{n=1}^{\infty}\frac{(-1)^{n+1}}{n} \sin\left(\tfrac{\pi}{2}n(1+\theta^*)\right)|x|^{-n},\quad |x|>1.
\end{equation*}
This formula coincides completely with the expression (\ref{eq:G_a=1}). Thus, the expansion of the distribution function (\ref{eq:G(x)_a=1_tmp0}) coincides exactly with the expansion (\ref{eq:G_a=1})which was obtained earlier. Therefore, in the domain $|x|>1$ the series (\ref{eq:G_a=1}) converges to the distribution function (\ref{eq:G(x)_a=1}). The remark has been proved.
\begin{flushright}
  $\Box$
\end{flushright}
\end{proof}

Theorem~\ref{theor:cdfExpansion} gives an opportunity to calculate the distribution function for large values of the coordinate $x$ using a power series (\ref{eq:G(x)}). As mentioned in the Introduction, for large values of the coordinate $x$ the use of the integral representation (\ref{eq:G(x)_int}) no longer allows one to calculate the distribution function correctly, and here it is necessary to apply other calculation methods.  The most obvious and probably the only way to calculate the distribution function at large values of $x$ is to use the expansions (\ref{eq:G(x)}).

However, before we use this expansion, it is necessary to obtain a criterion that makes it possible to determine the coordinate $x$ in which, for a specified value of $\alpha$ and the number of summands $N$ in the sum (\ref{eq:G(x)}), the specified accuracy of calculating the distribution function will be achieved. Such a criterion can be obtained by evaluating the remainder term (\ref{eq:RG_NInf}). From the formula (\ref{eq:G(x)}) and (\ref{eq:RG_NInf}) it follows that
\begin{equation*}
  \left|G(x,\alpha,\theta) -\tfrac{1}{2}(1+\sign(x))+\sign(x) G_N^\infty(|x|,\alpha,\theta^*)\right|\leqslant
  \frac{|x|^{-\alpha N}}{\alpha N!}\left(\Gamma(\alpha N)+|x|^{-\alpha}\Gamma(\alpha (N+1))\right).
\end{equation*}
If now for the specified $\alpha$ and $N$ we give the value of the absolute error $\varepsilon$, the value of which should not exceed the true absolute error of the calculation using the expansion (\ref{eq:G(x)}), i.e.
\begin{equation*}
   \left|G(x,\alpha,\theta) -\tfrac{1}{2}(1+\sign(x))+\sign(x) G_N^\infty(|x|,\alpha,\theta^*)\right|\leqslant\varepsilon,
\end{equation*}
then this gives an opportunity to introduce the threshold coordinate $x_\varepsilon^N$. The value of the threshold coordinate can be found from the solution to the equation
\begin{equation}\label{eq:x_eps_eq_cdf}
  \frac{\left|x_\varepsilon^N\right|^{-\alpha N}}{\alpha N!}\left(\Gamma(\alpha N)+\left|x_\varepsilon^N\right|^{-\alpha}\Gamma(\alpha (N+1))\right)=\varepsilon.
\end{equation}

Unfortunately, it is impossible to solve this equation analytically and obtain an explicit expression for the threshold coordinate $x_\varepsilon^N$. Nevertheless, numerical methods without much difficulty help to find a solution to this equation for specified $\alpha$, $N$ and $\varepsilon$.
As a result, we obtain the condition
\begin{equation}\label{eq:|G-GN|_cond}
  \left|G(x,\alpha,\theta) -\tfrac{1}{2}(1+\sign(x))+\sign(x) G_N^\infty(|x|,\alpha,\theta^*)\right|\leqslant\varepsilon,\quad |x|\geqslant x_\varepsilon^N.
\end{equation}
It means that at the values of $|x|\geqslant x_\varepsilon^N$ the absolute error in calculating the distribution function using a power series (\ref{eq:G(x)}) will not exceed the specified accuracy level $\varepsilon$. Here $\alpha, N$ and $\varepsilon$ are specified and $x_\varepsilon^N$ is found from the solution to the equation (\ref{eq:x_eps_eq_cdf}).

Taking account of all the foregoing, we obtain formulas for calculating the distribution function
\begin{equation}\label{eq:cdfCalcExpr}
  G_N(x,\alpha,\theta) =\tfrac{1}{2}(1+\sign(x))-\sign(x) G_N^\infty(|x|,\alpha,\theta^*), \quad |x|\geqslant x_\varepsilon^N.
\end{equation}
Here $G_N^\infty(x,\alpha,\theta)$ is determined by the expression (\ref{eq:G_NInf}), and the threshold coordinate $x_\varepsilon^N$ is found from the solution to the equation (\ref{eq:x_eps_eq_cdf}). The accuracy level $\varepsilon$ and the number of summands $N$  are specified beforehand. In this case, it can be guaranteed that in the domain $|x|\geqslant x_\varepsilon^N$ at a specified $N$ the absolute error in calculating the distribution function using this formula will not exceed $\varepsilon$.

Figures~\ref{fig:cdf_a07},~\ref{fig:cdf_a1},~and~\ref{fig:cdf_a13} show the calculation results  of the distribution function using the formula (\ref{eq:cdfCalcExpr}) and the results of calculating the quantity  of the absolute error for the values $\alpha=0.7, 1, 1.3$. Figures~\ref{fig:cdf_a07}a,~\ref{fig:cdf_a1}a,~and~\ref{fig:cdf_a13}a demonstrate the results of calculating the distribution function. In these figures, the solid curve corresponds to the exact value of the distribution function. In the case $\alpha\neq1$ the integral representation (\ref{eq:G(x)_int}) was used for calculations, and in the case $\alpha=1$ the formula (\ref{eq:G(x)_a=1}) was used. The dashed-dotted curves in these figures correspond to the results of the calculation using the expansion (\ref{eq:cdfCalcExpr}) for the values $N=3, 10, 30, 60, 90$. The circles in these figures show the position of the threshold coordinate $x_\varepsilon^N$ for the chosen values of $N$ and the specified accuracy level $\varepsilon=10^{-5}$. The value of the threshold coordinate for each $N$ and selected $\varepsilon$ was found by solving the equation (\ref{eq:x_eps_eq_cdf}).

Figures~\ref{fig:cdf_a07}b,~\ref{fig:cdf_a1}b,~and~\ref{fig:cdf_a13}b give the absolute error of calculating the distribution function using the expansion (\ref{eq:cdfCalcExpr}). In these figures, the solid curve is the exact value of the absolute error $|G(x,\alpha,\theta)-G_N(x,\alpha,\theta)|$. Here to calculate $G(x,\alpha,\theta)$ the integral representation (\ref{eq:G(x)_int}) was used in the case $\alpha\neq1$, and the formula (\ref{eq:G(x)_a=1}) was applied in the case $\alpha=1$. To calculate $G_N(x,\alpha,\theta)$ the formula (\ref{eq:cdfCalcExpr}) was used. The dashed-dotted curves are the estimate of the remainder term (\ref{eq:RG_NInf}). The dotted line shows the position of the specified accuracy level $\varepsilon$, and the circles are the position of the threshold coordinate $x_\varepsilon^N$ for each value of $N$.
\begin{figure}
\includegraphics[width=0.47\textwidth]{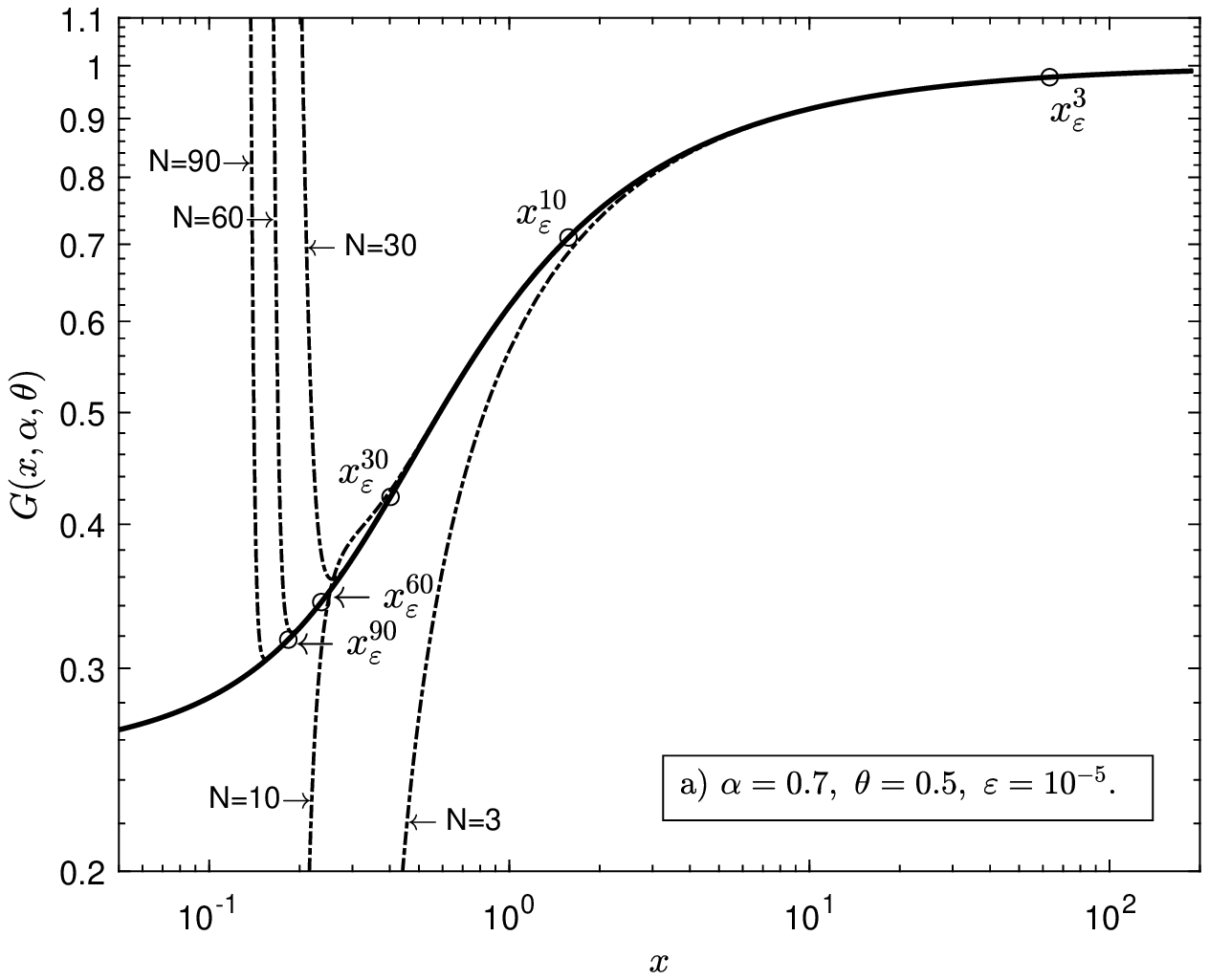}\hfill
\includegraphics[width=0.47\textwidth]{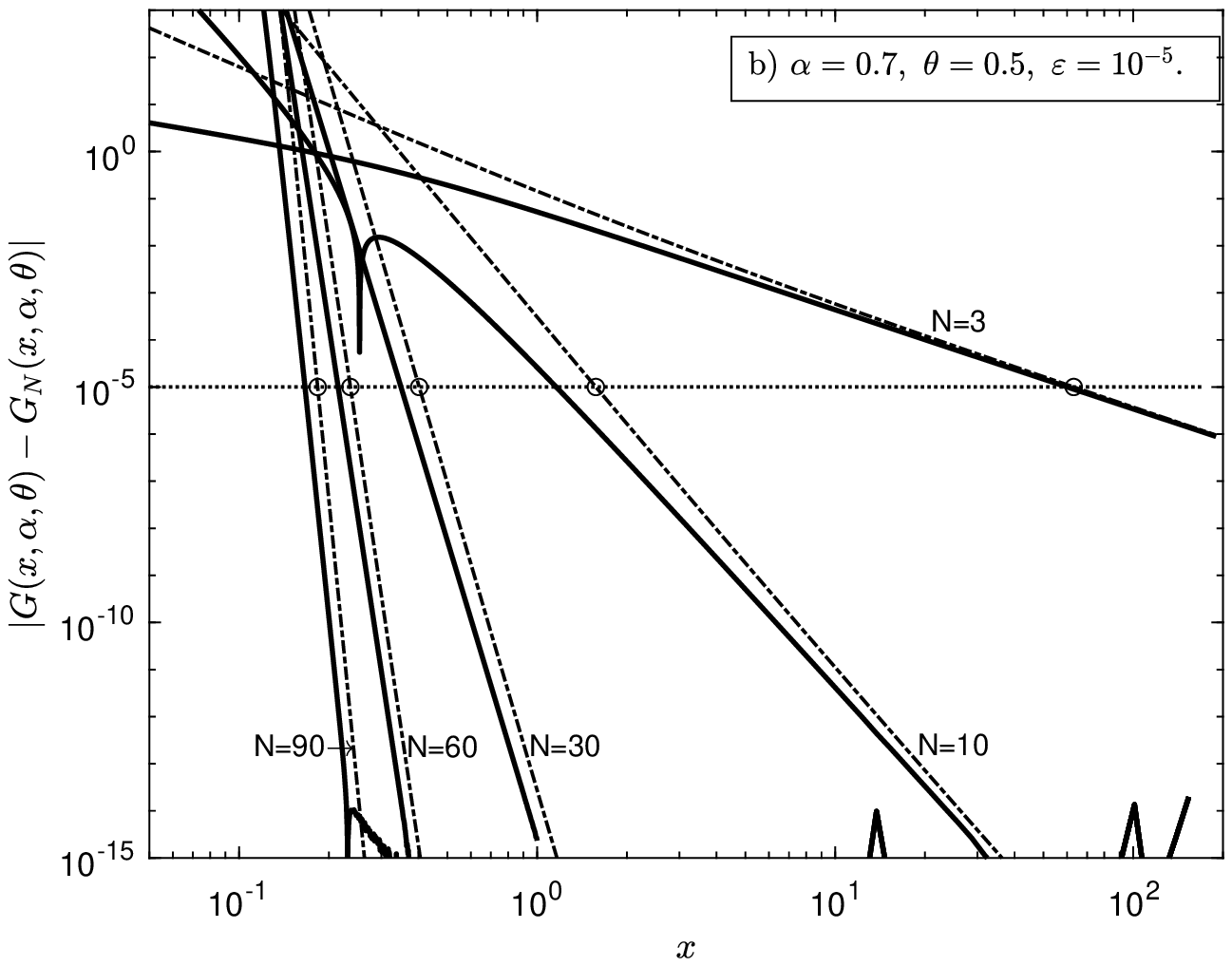}\\
\caption{a) The distribution function $G(x,\alpha,\theta)$ for the parameter values shown in the figure.  The solid curve is the integral representation (\ref{eq:G(x)_int}), the dash-dotted curves are the representation in the form of a power series (\ref{eq:cdfCalcExpr}) for different values of the number of summands $N$ in the sum. The circles show the position of the threshold coordinate  $x_\varepsilon^N$ for corresponding values of $N$ and the specified accuracy level $\varepsilon$. (b) Graph of the absolute error of calculating the distribution function $G(x,\alpha,\theta)$ with the use of a power series (\ref{eq:cdfCalcExpr}). The solid curves are the exact value of the absolute error $|G(x,\alpha,\theta)-G_N(x,\alpha,\theta)|$, the dash-dotted curves are the residual term estimate (\ref{eq:RG_NInf}), the dotted line is the specified accuracy level $\varepsilon$, the circles show the position of the threshold coordinate $x_\varepsilon^N$}\label{fig:cdf_a07}
\end{figure}

\begin{figure}
\includegraphics[width=0.47\textwidth]{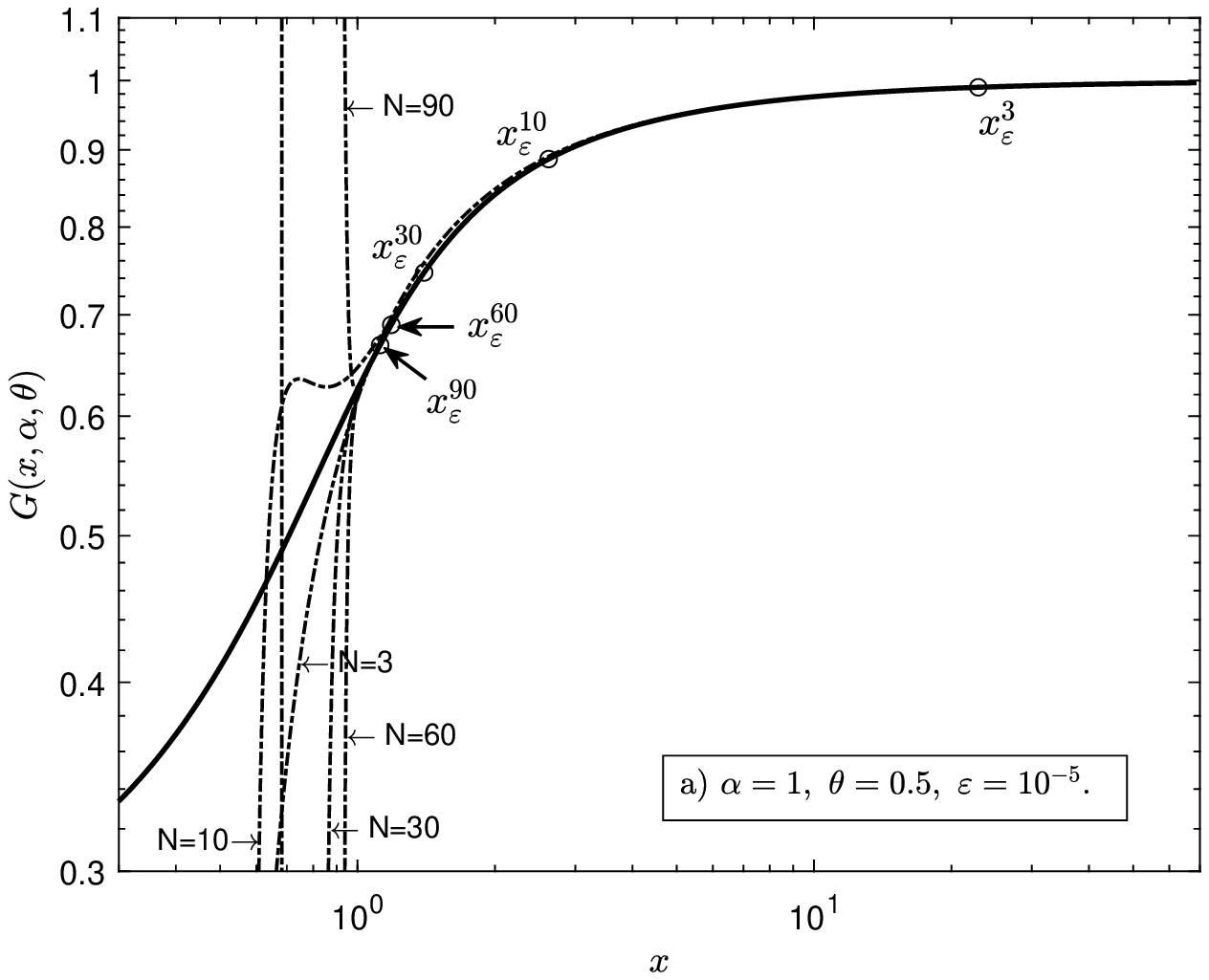}\hfill
\includegraphics[width=0.47\textwidth]{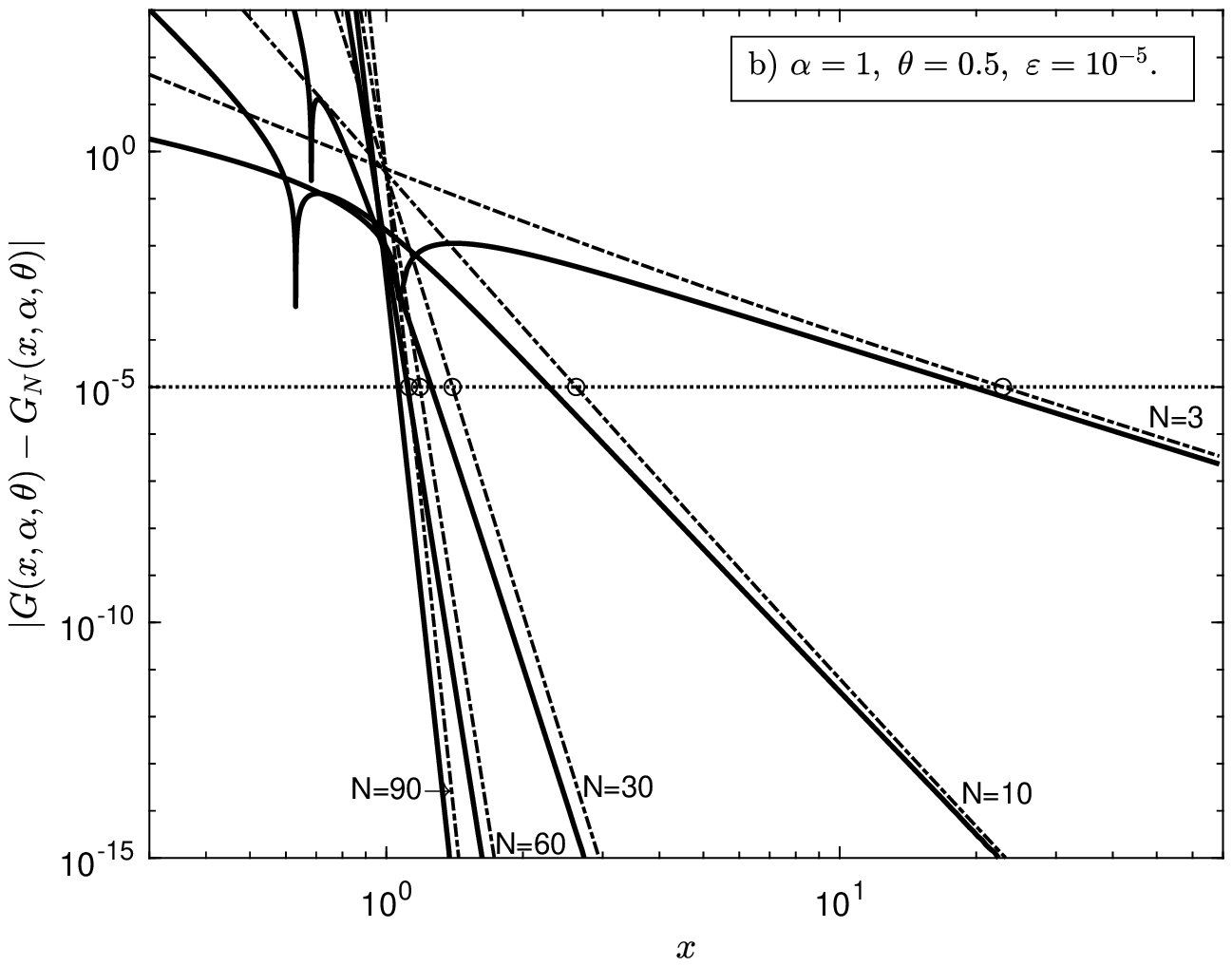}\\
\caption{a) The distribution function $G(x,\alpha,\theta)$ for the parameter values shown in the figure.  The solid curve is the formula (\ref{eq:G(x)_a=1}), the dash-dotted curves are the power series representation (\ref{eq:cdfCalcExpr}) for different values of the number of summands $N$ in the sum. The circles show the position of the threshold coordinate $x_\varepsilon^N$ for the corresponding values of  $N$ and the specified level of accuracy $\varepsilon$. (b) Graph of the absolute error of calculating the distribution function $G(x,\alpha,\theta)$ with the use of a power series  (\ref{eq:cdfCalcExpr}). The solid curves are the exact value of the absolute error $|G(x,\alpha,\theta)-G_N(x,\alpha,\theta)|$, the dash-dotted curves are the residual term estimate (\ref{eq:RG_NInf}), the dotted line is the specified accuracy level $\varepsilon$, the circles show the position of the threshold coordinate $x_\varepsilon^N$}\label{fig:cdf_a1}
\end{figure}

\begin{figure}
\includegraphics[width=0.47\textwidth]{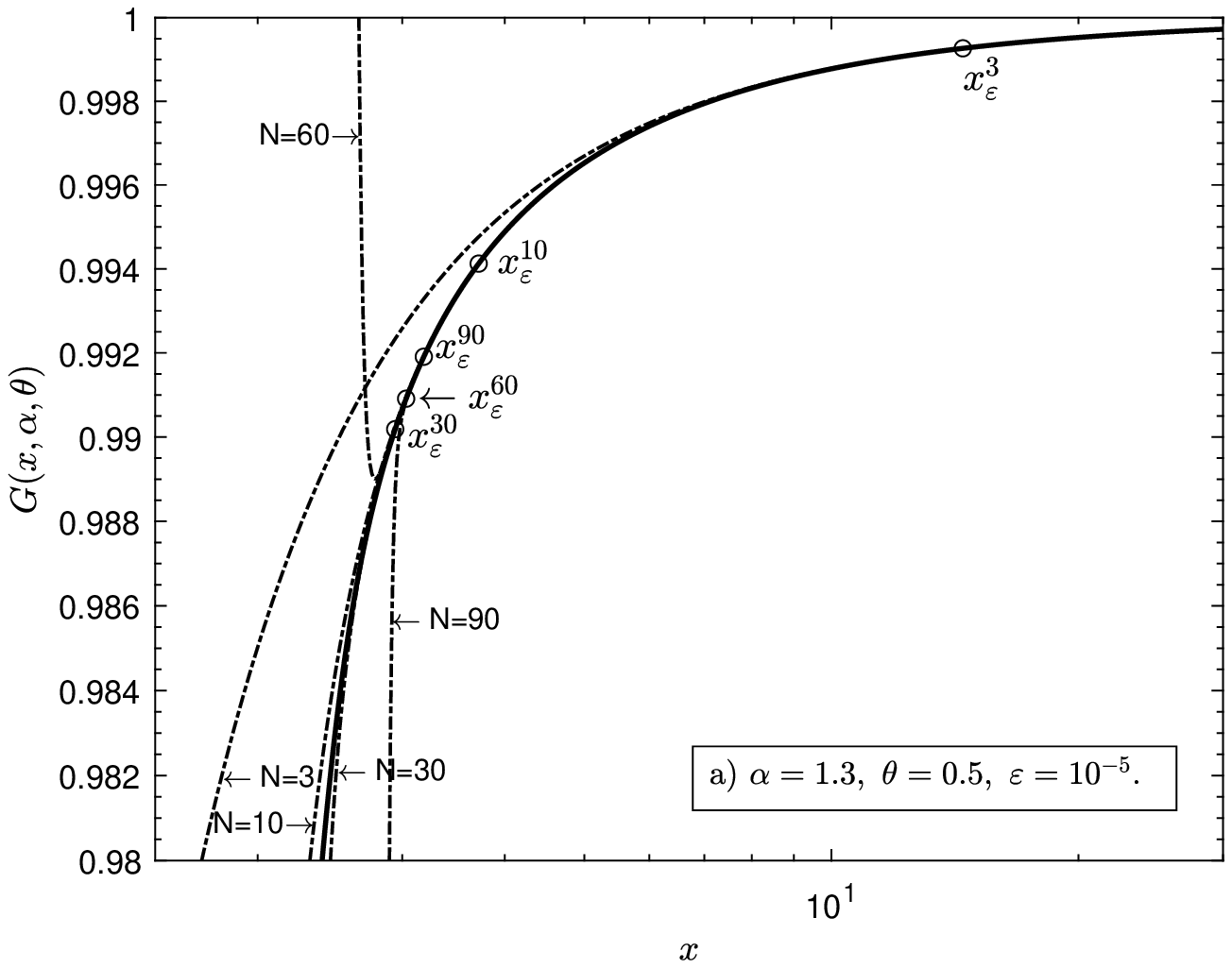}\hfill
\includegraphics[width=0.47\textwidth]{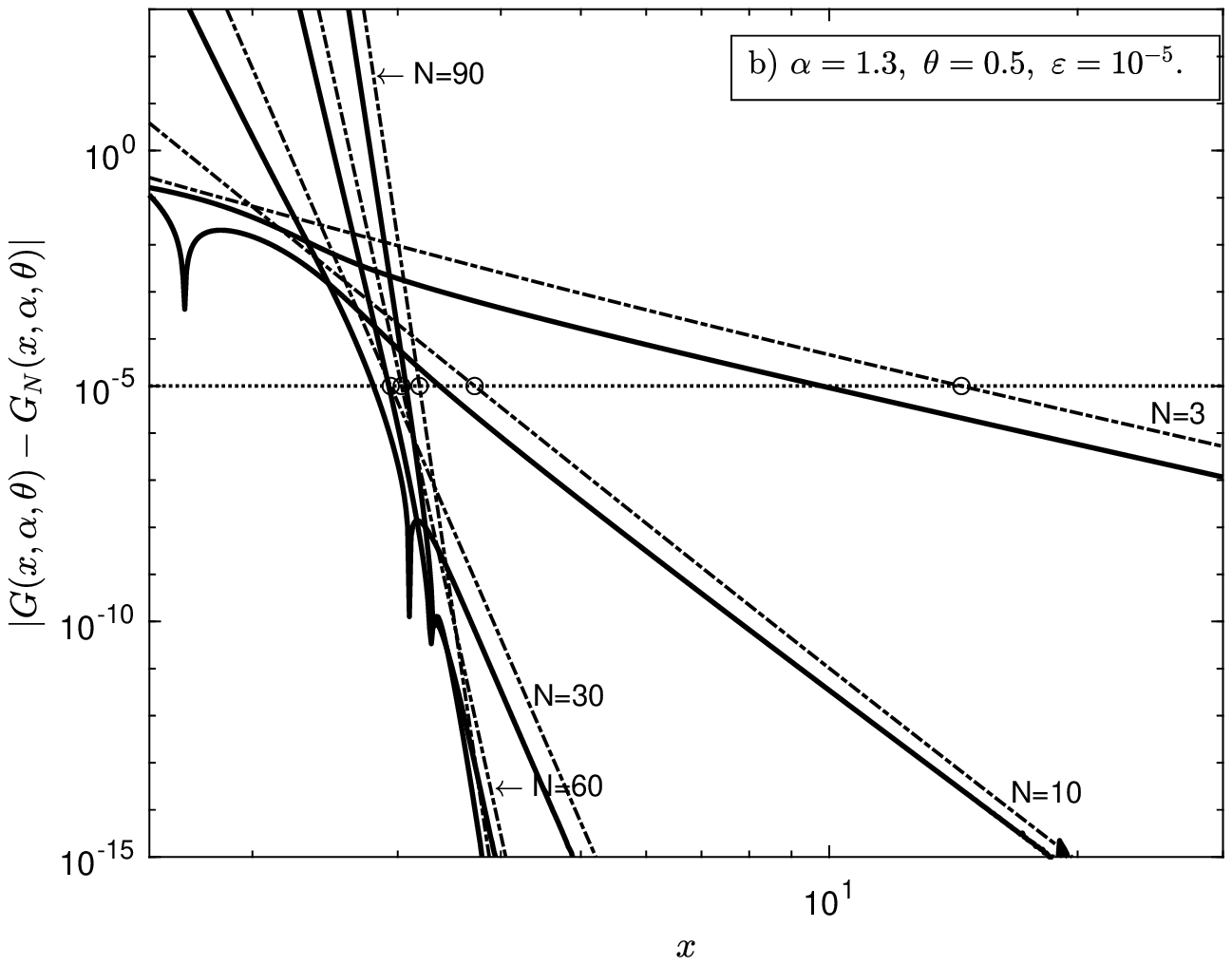}\\
\caption{a) The distribution function $G(x,\alpha,\theta)$ for the parameter values shown in the figure. The solid curve is the integral representation (\ref{eq:G(x)_int}), the dash-dotted curves are the representation in a power series (\ref{eq:cdfCalcExpr}) for different values of the number of summands $N$ in the sum. The circles show the position of the threshold coordinate $x_\varepsilon^N$ for the corresponding values $N$ and the specified accuracy level $\varepsilon$. (b) Graph of the absolute error of calculating the distribution function $G(x,\alpha,\theta)$ using a power series (\ref{eq:cdfCalcExpr}). The solid curves are the exact value of the absolute error $|G(x,\alpha,\theta)-G_N(x,\alpha,\theta)|$, the dash-dotted curves are the residual term estimate (\ref{eq:RG_NInf}), the dotted line is the specified accuracy level $\varepsilon$, the circles show the position of the threshold coordinate  $x_\varepsilon^N$}\label{fig:cdf_a13}
\end{figure}

Figures~\ref{fig:cdf_a07}b,~\ref{fig:cdf_a1}b,~and~\ref{fig:cdf_a13}b clearly show that in the domain $x>x_\varepsilon^N$ the exact value of the absolute error (solid curves) turns out to be less than the estimate of the remainder term (\ref{eq:RG_NInf}) (dashed-dotted curves). It is also clearly seen that in the domain $x>x_\varepsilon^N$, both the exact value of the absolute error and the residual term estimate are less than the value of the selected accuracy level $\varepsilon$. This means that in the domain $|x|>x_\varepsilon^N$ the formula (\ref{eq:cdfCalcExpr}) can be used to calculate the distribution function. At the same time, it can be guaranteed that the absolute error in calculating the distribution function using this formula will not exceed the selected accuracy level $\varepsilon$, and in reality it will be much less than this value.

If we now analyze the behavior of the threshold coordinate $x_\varepsilon^N$ from the number of terms $N$, then we can see that for each of the considered cases $\alpha<1$, $\alpha=1$ and $\alpha>1$ this behavior differs. It can be seen from fig.~\ref{fig:cdf_a07} that in the case $\alpha=0.7$ as $N$ increases the value $x_\varepsilon^N$ decreases. This behavior of the threshold coordinate is the result of corollary~\ref{corol:cdfConverg}. Indeed, in the first item of this corollary it is proved that in the case $\alpha<1$ at $N\to\infty$ the series (\ref{eq:G_a<1}) converges for any $x$. This means that as the number of terms in the formula (\ref{eq:cdfCalcExpr}) increases, the accuracy of calculating the distribution function at some fixed point $x$ will increase. In turn, this leads to the fact that the range of coordinates $x$ under which the condition (\ref{eq:|G-GN|_cond}) is satisfied will expand. Therefore, as $N$ increases the value of the threshold coordinate $x_\varepsilon^N$ will decrease.

To calculate the limit value of the coordinate $x_\varepsilon^N$ at $N\to\infty$ we will consider the equation (\ref{eq:x_eps_eq_cdf}) and assume that $N\to\infty$. Taking account that $N+1\approx N$ at $N\to\infty$ this equation takes the form $2|x_\varepsilon^N|^{-\alpha N}\Gamma(\alpha N)=\varepsilon\pi\Gamma(N)$. From here we find
\begin{equation*}
  |x_\varepsilon^N|=\left(\frac{2\Gamma(\alpha N)}{\pi\varepsilon\Gamma(N)}\right)^{\frac{1}{\alpha N}}.
\end{equation*}
We now find the limit of this expression at $N\to\infty$. Using the Stirling formula (\ref{eq:Stirling}), we obtain
\begin{multline}\label{eq:x_eps_lim}
  \lim_{N\to\infty} |x_\varepsilon^N|=\lim_{N\to\infty}\left(\frac{2\Gamma(\alpha N)}{\pi\varepsilon\Gamma(N)}\right)^{\frac{1}{\alpha N}}
  =\lim_{N\to\infty}\left(\frac{2}{\pi\varepsilon}\frac{e^{-\alpha N}(\alpha N)^{\alpha N-\frac{1}{2}}\sqrt{2\pi}}{e^{-N} (N)^{N-\frac{1}{2}}\sqrt{2\pi}}\right)^{\frac{1}{\alpha N}} \\
  =e^{\frac{1}{\alpha}-1} \alpha^{-1}\lim_{N\to\infty}\left(\frac{2}{\pi\varepsilon\alpha^2}\right)^{\frac{1}{\alpha N}} N^{1-\frac{1}{\alpha}}=\begin{cases}
                           0, &  \alpha<1 \\
                           1, & \alpha=1 \\
                           \infty, & \alpha>1.
                         \end{cases}
\end{multline}
Thus, in the case $\alpha<1$ we get $\lim_{N\to\infty} |x_\varepsilon^N|=0$. This result is a consequence of the convergence of the series (\ref{eq:G_NInf}) in the case $\alpha<1$.

A similar behavior of the threshold coordinate is also observed in the case $\alpha=1$. Figure~\ref{fig:cdf_a1} shows that as the number of $N$ summands in the formula (\ref{eq:cdfCalcExpr}) increases the value of the threshold coordinate decreases. However, unlike the previous case, it follows from the expression (\ref{eq:x_eps_lim}) that in this case $\lim_{N\to\infty}|x_\varepsilon^N|=1$. Such behavior of the threshold coordinate is the result of  corollary~\ref{corol:cdfConverg}, where in the second item it was proved that in the case $\alpha=1$ the series (\ref{eq:G_a=1}) converges at $N\to\infty$ in the domain $|x|>1$.

In the case $\alpha>1$ the behavior of the threshold coordinate changes. Figure~\ref{fig:cdf_a13}a shows that as the number of summands $N$ in the formula (\ref{eq:cdfCalcExpr}) increases, the value of the threshold coordinate first decreases: $x_\varepsilon^3>x_\varepsilon^{10}>x_\varepsilon^{30}$. However, a further increase in $N$ leads to an increase in the threshold coordinate: $x_\varepsilon^{30}<x_\varepsilon^{60}<x_\varepsilon^{90}$. Such behavior of the threshold is in full accordance with corollary~\ref{corol:cdfConverg}, where in the third item it was proved that in the case $\alpha>1$ the series (\ref{eq:G_NInf}) diverged at $N\to\infty$.  The cause of the divergence of this series lies in the presence of the multiplier $\Gamma(\alpha n)/\Gamma(n+1)$. Hence, it is clear that at  $\alpha>1$ this multiplier is more than 1 and as  $n$ increases the value of this multiplier only increases. Such behavior of this multiplier is the reason for the divergence of the series (\ref{eq:G_NInf}). In this series there is also a multiplier $x^{-\alpha n}$. As one can see, as the value of $x$ increases, the value of this multiplier decreases. The competition between these two factors leads to the observed behavior of the threshold coordinate $x_\varepsilon^N$.

Indeed, the threshold coordinate is found as a result of solving the equation $|G(x,\alpha,\theta)-G_N(x,\alpha,\theta)|=\varepsilon$. Therefore, first the increase in the number of summands $N$ in the sum (\ref{eq:cdfCalcExpr}) leads to a decrease in the coordinate $x_\varepsilon^N$. This is testified by the fact that $x_\varepsilon^3>x_\varepsilon^{10}>x_\varepsilon^{30}$. However, further increase in  $N$ leads to the fact that the factor $\Gamma(\alpha n)/\Gamma(n+1)$ starts growing rapidly, and to compensate for this growth, it is necessary to increase $x$ in the multiplier $x^{-\alpha n}$. This is what leads to a shift of the threshold coordinate towards larger values of $x$ as $N$ increases. Fig.~\ref{fig:cdf_a13} demonstrates such behavior from which it is clear that  $x_\varepsilon^{30}<x_\varepsilon^{60}<x_\varepsilon^{90}$. In case, if $n\to\infty$, then the factor $\Gamma(\alpha n)/\Gamma(n)\to\infty$. Therefore, to compensate for this growth, it is necessary that $x\to\infty$ in the multiplier $x^{-\alpha n}$. Thus, the obtained conclusion is in full accordance with the expression (\ref{eq:x_eps_lim}), which shows that $\lim_{N\to\infty}|x_\varepsilon^N|=\infty$, if $\alpha>1$.

It should be pointed out, if in the considered case ($\alpha>1$) we fix some arbitrary $N$, then in view of the presence of the factor $x^{-\alpha N}$ with an increase in the value of $x$ one can achieve any preset calculation accuracy $\varepsilon$. Consequently, at $\alpha>1$ the formula (\ref{eq:cdfCalcExpr}) is asymptotic at $|x|\to\infty$, which is the result of corollary~\ref{corol:cdfConverg}.

\section{Calculation of the distribution function at $x\to\infty$}

We return to the question of calculating the distribution function of a strictly stable law in the case of large values of the coordinate $x$. The main approach to calculate the distribution function is to use the integral representation (\ref{eq:G(x)_int}). In theory, this integral representation is valid for all values of the parameters $\alpha,\theta$ (except for the value $\alpha=1$) and all $x$. However, in practice, it is not always possible to calculate the integral numerically included in this integral representation. Problems arise at small and large values of the coordinate $x$.  The cause of the difficulties that arise is the behavior of the integrand in the formula (\ref{eq:G(x)_int}).
\begin{figure}
  \centering
  \includegraphics[width=0.8\textwidth]{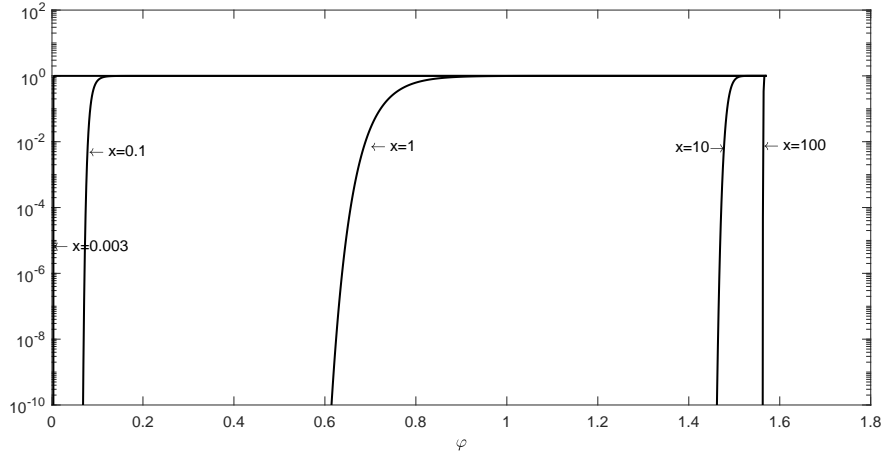}
  \caption{ The relationship between the integrand of the integral representation for the distribution function (\ref{eq:G(x)_int}) and the value of the integration variable $\varphi$. The figure shows graphs of the integrand for the values of the parameters $\alpha=1.1, \theta=0$ and the specified values of the coordinate $x$}\label{fig:cdfIntegrand}
\end{figure}

Fig.~\ref{fig:cdfIntegrand} presents a graph of the integrand of the integral representation (\ref{eq:G(x)_int}) for the parameters $\alpha=1.1, \theta=0$ and the specified values of coordinates $x$ depending on the integration variable $\varphi$.   The variable $\varphi$ changes within the range from $-\pi\theta/2$ to $\pi/2$.  It is clear from the figure that at very small and large values of $x$ the integrand in (\ref{eq:G(x)_int}) increases very sharply from 0 to the value of 1. In the case $\alpha<1$ the behavior of the integrand will be inverse. In this case, the function is decreasing, and therefore it will sharply decrease from 1 to 0. With a further decrease or increase in the value of $x$ the steep increase (in the case $\alpha>1$) or decrease (in the case $\alpha<1$) in sections will increase. As a result, at some $x$ numerical integration algorithms cannot recognize the monotonic nature of the function and begin to produce an incorrect result.

\begin{figure}
  \centering
  \includegraphics[width=0.48\textwidth]{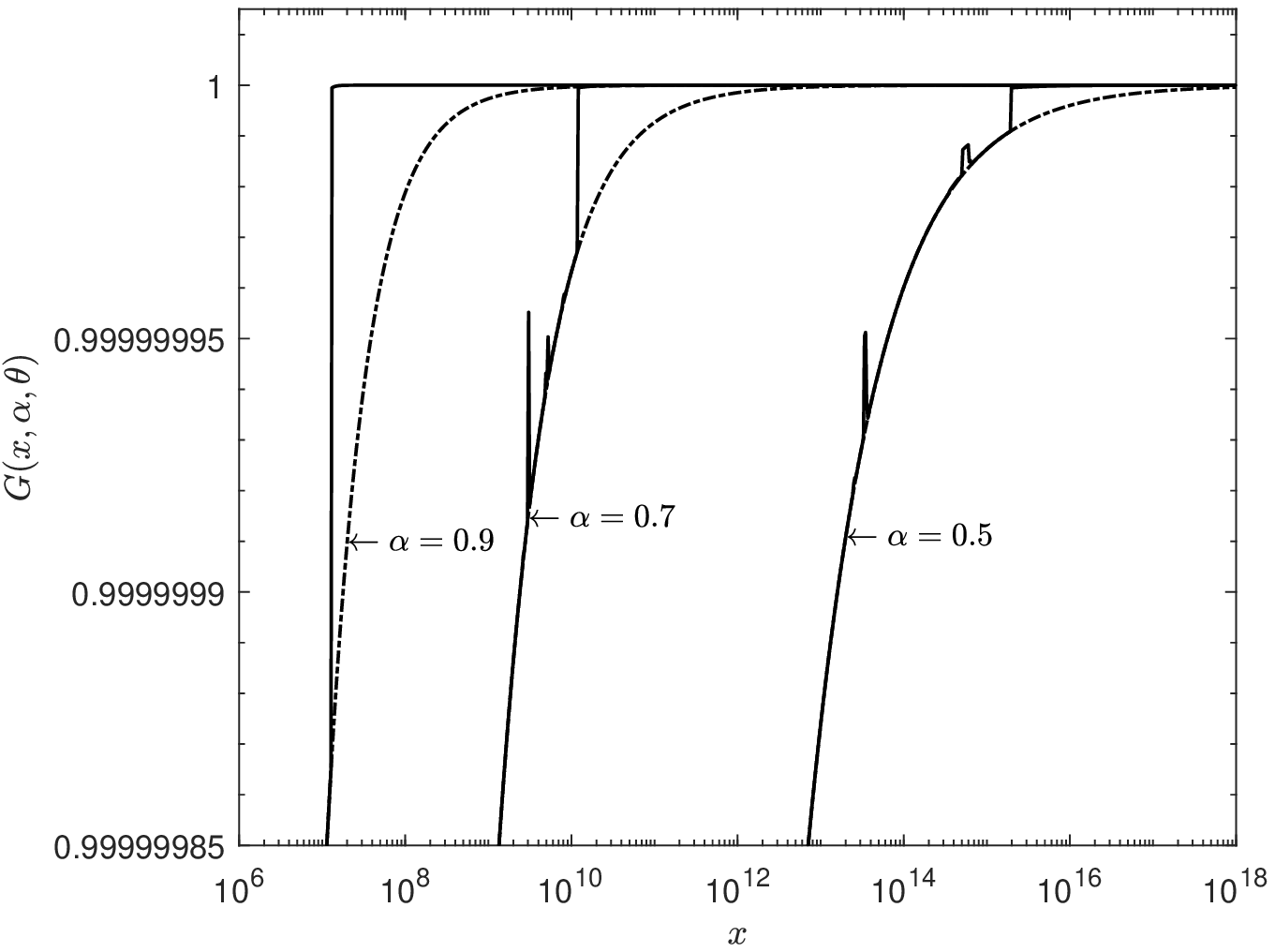}\hfill
  \includegraphics[width=0.48\textwidth]{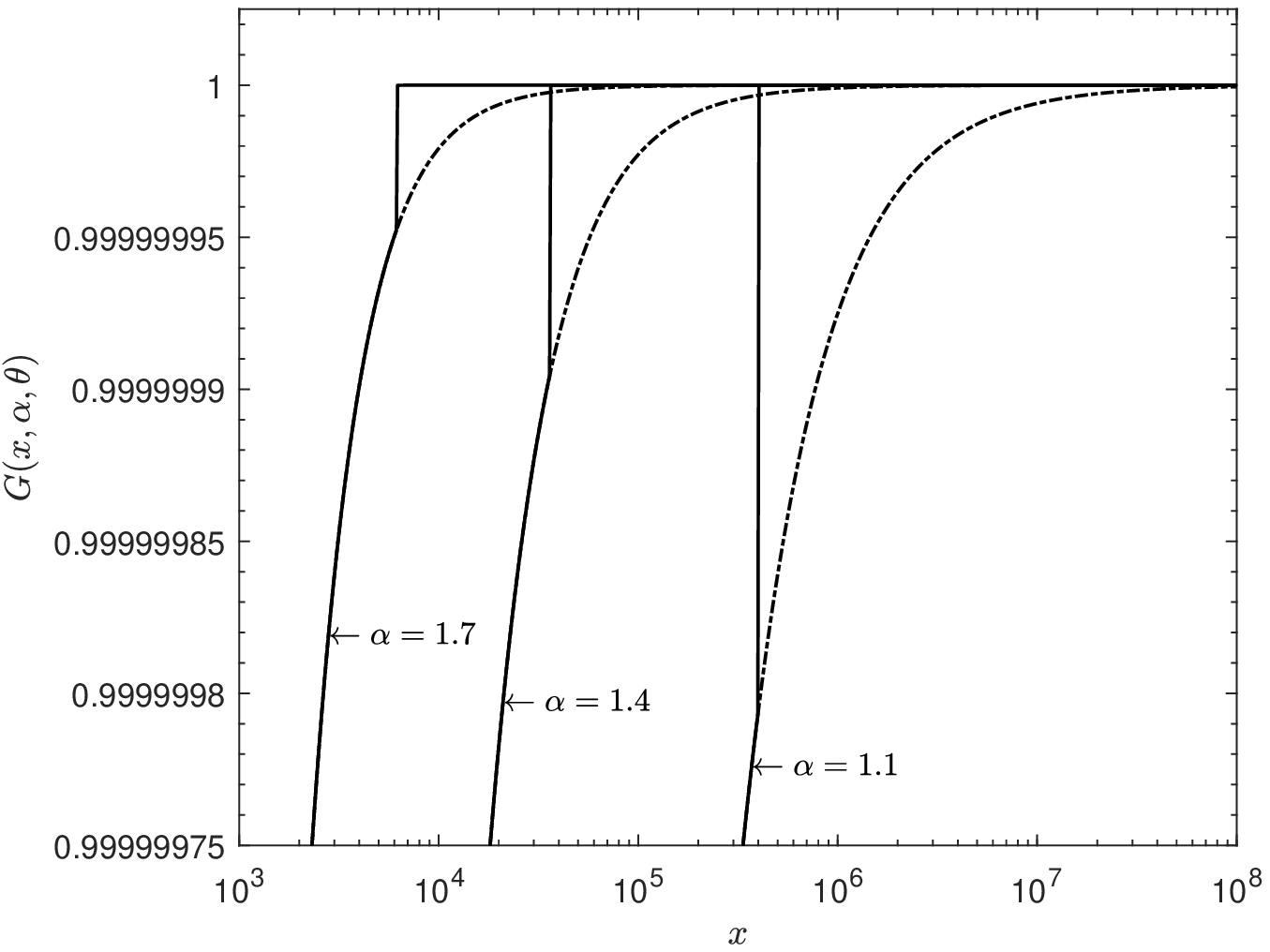}\\
  \caption{ Distribution function $G(x,\alpha,\theta)$ a strictly stable law. The figure on the left is the case $\alpha<1$, the figure on the right is the case $\alpha>1$. The values of the indicator $\alpha$ are given in the figures for all graphs $\theta=0$. The solid curves are integral representation (\ref{eq:G(x)_int}), dash-dotted curves are the representation in a power series  (\ref{eq:cdfCalcExpr})}\label{fig:сdf_x2inf}
\end{figure}

The most suitable method for calculating the distribution function in the case of small and large values of $x$ is to use asymptotic expansions. The problem of calculating the distribution function in the case $x\to0$ was considered in the article \cite{Saenko2022b}. In the case $x\to\infty$ it is expedient to use theorem~\ref{theor:cdfExpansion} and, in particular, the formula (\ref{eq:cdfCalcExpr}). Figure~\ref{fig:сdf_x2inf} shows the results of calculating the distribution function $G(x,\alpha,\theta)$ using the integral representation (\ref{eq:G(x)_int}) (solid curves) and the formula (\ref{eq:cdfCalcExpr}) (dash-dotted curves) at large values of $x$. The left figure shows the case $\alpha<1$, the right one shows the case $\alpha>1$. To calculate the integral in the formula (\ref{eq:G(x)_int}) the Gauss-Kronrod algorithm was used.  One can see from the figures that at large values of $x$ the numerical integration algorithm used is incapable of calculating the integral in (\ref{eq:G(x)_int}) and starts giving an incorrect result. It can also be seen from the figure that the value of the critical coordinate $x_{\mbox{\scriptsize cr}}$, at which the numerical integration algorithm starts calculating the integral incorrectly, depends on the value of $\alpha$. For the value $\alpha=0.5$ the value is $x_{\mbox{\scriptsize cr}}\approx 3.2\cdot10^{13}$,  for the value $\alpha=0.7$ the value is $x_{\mbox{\scriptsize cr}}\approx 3\cdot10^9$,  for the value $\alpha=0.9$ the value is $x_{\mbox{\scriptsize cr}}\approx 10^7$, for the value $\alpha=1.1$ the value is $x_{\mbox{\scriptsize cr}}\approx 4\cdot10^5$, for the value $\alpha=1.4$ the value is $x_{\mbox{\scriptsize cr}}\approx 3.5\cdot10^4$, for the value $\alpha=1.7$ the value is $x_{\mbox{\scriptsize cr}}\approx 6\cdot10^3$.  It is clear that as the value $\alpha$ decreases, the value $x_{\mbox{\scriptsize cr}}$ increases. Thus, at $|x|>x_{\mbox{\scriptsize cr}}$ other methods of calculating the distribution function should be used.

The use of the formula (\ref{eq:cdfCalcExpr}) to calculate the distribution function at $|x|>x_{\mbox{\scriptsize cr}}$ solves the problem completely. It is clear that in the domain $x<x_{\mbox{\scriptsize cr}}$ the calculation results using the integral representation  (\ref{eq:G(x)_int}) and the formula (\ref{eq:cdfCalcExpr}) coincide completely. It should be noted that such a coincidence will be observed in the domain $x_\varepsilon^N\leqslant x\leqslant x_{\mbox{\scriptsize cr}}$.  In the domain $x>x_{\mbox{\scriptsize cr}}$ the numerical integration algorithm no longer makes it possible to obtain the correct value of the distribution function using the integral representation (\ref{eq:G(x)_int}), whereas the use of the formula (\ref{eq:cdfCalcExpr}) does not lead to any calculation difficulties. It should be noted that the number of summands $N=30$ was used to calculate the distribution function with the help of the formula (\ref{eq:cdfCalcExpr}).  The threshold coordinate $x_\varepsilon^N$ for the accuracy level $\varepsilon=10^{-5}$  for the presented graphs has the following values: at $\alpha=0.5, x_\varepsilon^{30}=0.088$, at $\alpha=0.7, x_\varepsilon^{30}=0.402$, at $\alpha=0.9, x_\varepsilon^{30}=1.000$, at $\alpha=1.1, x_\varepsilon^{30}=1.860$, at $\alpha=1.4, x_\varepsilon^{30}=3.552$ and at $\alpha=1.7, x_\varepsilon^{30}=5.612$. As one can see, the values of the threshold coordinate for each of the graphs presented in the figure are significantly less than the range of values of $x$, which is given in the  figures. Consequently, it can be asserted that in the domain $|x|\geqslant x_\varepsilon^N$ to calculate the distribution function, one can use the formula (\ref{eq:cdfCalcExpr}). In this case, the absolute error of calculating the distribution function will not exceed the specified accuracy level $\varepsilon$ and as $x$ increases the absolute error of calculation will only decrease. Thus, the use of theorem~\ref{theor:cdfExpansion} and, in particular, the formula (\ref{eq:cdfCalcExpr}) solves the problem of calculating the distribution function completely at $x\to\infty$.

It should be noted that the presented results are related to standard strict-stable laws, i.e. to laws with scale parameter $\lambda=1$. To transform the distribution function of a standard strictly stable law into a distribution function of a strictly stable law with an arbitrary $\lambda$ one can use remark~7 from the article \cite{Saenko2020b}, (see also\cite{Zolotarev1986, Zolotarev1999}).

\section{Conclusion}

The article considers the problem of calculating the distribution function of a strictly stable law with the characteristic function (\ref{eq:CF_formC}) at large values of the coordinate $x$. The need to solve this problem is dictated by the inability of numerical integration algorithms to calculate the integral correctly in the integral representation (\ref{eq:G(x)_int}) at large $x$. The cause of such difficulties lies in the behavior of the integrand. In this regard, the use of the integral representation to calculate the distribution function at large values of the coordinate is no longer possible, and it is necessary to apply other approaches to solve this problem.

To solve it, it was proposed to use the expansion of the distribution function in a power series at  $x\to\infty$. In the article, such an expansion was obtained, as well as an estimate for the remainder term. The results are formulated in theorem~\ref{theor:cdfExpansion}. The convergence of this series has been studied and it has been shown that in the case of $\alpha<1$ the series is convergent, in the case $\alpha>1$ it is asymptotic, and in the case $\alpha=1$ the series is convergent at $|x|>1$. These results are formulated as a corollary~\ref{corol:cdfConverg}. It should be noted that the results formulated in this corollary for the cases $\alpha<1$ and $\alpha>1$ are not new and generalize the known results related to the convergence of the expansion of the distribution function in a series (see, for example\cite{Zolotarev1986,Uchaikin1999}). Nevertheless, the study of the convergence of the series for the expansion of the distribution function in the case $\alpha=1$ was carried out for the first time. The study of this case showed that the obtained series is convergent at $|x|>1$. In addition, we managed to show that in this case  at $N\to\infty$ this series converges to the distribution function of the generalized Cauchy distribution (\ref{eq:G(x)_a=1}).

The estimate of the remainder term obtained in theorem~\ref{theor:cdfExpansion}, turned out to be very useful in the problem of calculating the distribution function. Using this estimate, we managed to obtain the equation (\ref{eq:x_eps_eq_cdf}) for the threshold coordinate $x_\varepsilon^N$. The threshold coordinate makes it possible to determine the range of coordinates $x$ in which the absolute calculation error does not exceed the required level of accuracy $\varepsilon$ at specified values of $\alpha$ and the number of summands $N$ in the expansion (\ref{eq:G_NInf}). As a result, the formula (\ref{eq:cdfCalcExpr}) is valid for calculating the distribution function.  The calculations performed showed that when using this formula, the absolute error in calculating the distribution function in the domain $|x|>x_\varepsilon^N$ does not exceed the required level of accuracy $\varepsilon$, and in reality is much less than this value. As $|x|$ increases, the absolute calculation error will only decrease. This makes it possible to use the formula (\ref{eq:cdfCalcExpr}) to calculate the distribution function even for those values of $x$, for which the use of the integral representation (\ref{eq:G(x)_int}) turns out to be impossible. Indeed, the calculations have shown that for the integral representation (\ref{eq:G(x)_int}) there is a critical value of the coordinate $x_{\mbox{\scriptsize cr}}$ at which numerical integration algorithms can no longer calculate the integral correctly (see Fig.~\ref{fig:сdf_x2inf}). At the same time, using the formula (\ref{eq:cdfCalcExpr}) does not lead to any calculation difficulties. Thus, using this formula to calculate the distribution function of a strictly stable law in the coordinate range $|x|>x_{\mbox{\scriptsize cr}}$ solves the problem of calculating the distribution function at large $x$.

As noted in the Introduction, the integral representation of the distribution function (\ref{eq:G(x)_int}) has two domains in which numerical methods have difficulties in calculating the integral. These are the ranges of coordinates at $x\to0$ and at $x\to\infty$. This article shows that in the domain of large values of coordinates, the formula (\ref{eq:cdfCalcExpr}) can be used for calculation. The problem of calculating the distribution function at small values of $x$ was solved earlier in the paper  \cite{Saenko2022b}. In this paper, the expansion of the distribution function in a power series at $x\to0$ was obtained and  the area of applicability of this expansion was determined. Thus, if to use the formula (\ref{eq:cdfCalcExpr}) to calculate the distribution function at large values of $x$, at small values of $x$ to use the results of the article \cite{Saenko2022b}, and to use in the intermediate domain the integral representation (\ref{eq:G(x)_int}), then we get an opportunity to calculate correctly the distribution function of a strictly stable law with the characteristic function (\ref{eq:CF_formC})  on the entire real line.

In conclusion, it should be pointed out that there are similar difficulties in calculating the probability density of a strictly stable law.  Calculation difficulties also arise at small and large values of the coordinate $x$. The problem of calculating the probability density is solved in the papers \cite{Saenko2022b,Saenko2023}. These articles show that if in the domain of small coordinates $x$ to     use the probability density expansion from the article \cite{Saenko2022b}, in the domain of large coordinates to use the expansion from the article \cite{Saenko2023}, and in the intermediate domain to use the integral representation for the density probability obtained in the article \cite{Saenko2020b}, then we get an opportunity to calculate the probability density of a strictly stable law correctly on the entire real line. Thus, the problem of calculating the probability density and distribution function of a strictly stable law with the characteristic function (\ref{eq:CF_formC}) on the entire real line turns out to be solved.

\AcknowledgementSection
The author thanks M. Yu. Dudikov for translation of the article into English.

\appendix
\section{Integral representation of the distribution function}\label{sec:IntRepr}
To perform the inverse Fourier transform and obtain the probability density distribution, the following lemma is useful, which determines the inversion formula
\begin{lemma}\label{lem:Inverse}
The probability density distribution $g(x,\alpha,\theta)$ for any admissible set of parameters $(\alpha,\theta)$ and any $x$ can be obtained using the inverse transform formulas
\begin{equation}\label{eq:InverseFormula}
  g(x,\alpha,\theta)=\frac{1}{2\pi}\int_{-\infty}^{\infty}e^{-itx}\hat{g}(t,\alpha,\theta)dt=
  \left\{\begin{array}{c}
           \displaystyle\frac{1}{\pi}\Re\int_{0}^{\infty} e^{itx}\hat{g}(t,\alpha,-\theta)dt,  \\
           \displaystyle\frac{1}{\pi}\Re\int_{0}^{\infty} e^{-itx}\hat{g}(t,\alpha,\theta)dt.
         \end{array}\right.
\end{equation}
\end{lemma}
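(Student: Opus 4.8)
The plan is to start from the first equality in (\ref{eq:InverseFormula}), which is nothing but the Fourier inversion theorem applied to the characteristic function (\ref{eq:CF_formC}): for admissible parameters $\hat g(t,\alpha,\theta)$ is integrable on $\mathbf{R}$, so the density is recovered as $g(x,\alpha,\theta)=\frac{1}{2\pi}\int_{-\infty}^{\infty}e^{-itx}\hat g(t,\alpha,\theta)\,dt$. The whole content of the lemma is then to fold this two-sided integral onto the half-line $(0,\infty)$ and to recognise the two equivalent real representations on the right-hand side.

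First I would split the integral at the origin and change the variable $t\mapsto-t$ in the part over $(-\infty,0)$, which gives
\begin{equation*}
  g(x,\alpha,\theta)=\frac{1}{2\pi}\int_{0}^{\infty}e^{itx}\hat g(-t,\alpha,\theta)\,dt+\frac{1}{2\pi}\int_{0}^{\infty}e^{-itx}\hat g(t,\alpha,\theta)\,dt.
\end{equation*}
The key structural fact, read off directly from (\ref{eq:CF_formC}), is that reversing the sign of $t$, reversing the sign of $\theta$, and complex conjugation all act identically on the characteristic function: for every $t$,
\begin{equation*}
  \hat g(-t,\alpha,\theta)=\hat g(t,\alpha,-\theta)=\overline{\hat g(t,\alpha,\theta)}.
\end{equation*}
This holds because in the exponent $-|t|^\alpha\exp\{-i\tfrac{\pi}{2}\alpha\theta\,\sign t\}$ each of the three operations flips the sign of the imaginary unit in the inner exponential while leaving the factor $|t|^\alpha$ untouched.

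Substituting the first identity into the first integral turns it into $\frac{1}{2\pi}\int_0^\infty e^{itx}\hat g(t,\alpha,-\theta)\,dt$, and the conjugation identity shows that this is exactly the complex conjugate of $\frac{1}{2\pi}\int_0^\infty e^{-itx}\hat g(t,\alpha,\theta)\,dt$. Since $g(x,\alpha,\theta)$ is real-valued, the sum of a complex number and its conjugate equals twice its real part, whence $g(x,\alpha,\theta)=\frac{1}{\pi}\Re\int_0^\infty e^{-itx}\hat g(t,\alpha,\theta)\,dt$; conjugating the other integral instead yields $g(x,\alpha,\theta)=\frac{1}{\pi}\Re\int_0^\infty e^{itx}\hat g(t,\alpha,-\theta)\,dt$. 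This delivers both forms of (\ref{eq:InverseFormula}) simultaneously.

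The step requiring the most care is the justification of absolute convergence that legitimises the splitting and the change of variable. The real part of the exponent in (\ref{eq:CF_formC}) equals $-|t|^\alpha\cos\!\left(\tfrac{\pi}{2}\alpha\theta\right)$, so integrability on $(0,\infty)$ holds provided $\cos\!\left(\tfrac{\pi}{2}\alpha\theta\right)>0$, i.e. $|\alpha\theta|<1$. For admissible parameters the constraint $|\theta|\leqslant\min(1,2/\alpha-1)$ gives $|\alpha\theta|\leqslant\min(\alpha,2-\alpha)\leqslant1$, with strict inequality except in the single degenerate case $\alpha=1$, $\theta=\pm1$, where the characteristic function ceases to decay and the law reduces to a point mass. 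Thus the manipulations are valid throughout the admissible range in which a density exists, and the genuinely degenerate endpoint is exactly the one excluded from the density-level statements.
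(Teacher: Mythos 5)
Your proof is correct: the conjugation/sign-reversal identity $\hat g(-t,\alpha,\theta)=\hat g(t,\alpha,-\theta)=\overline{\hat g(t,\alpha,\theta)}$ read off from (\ref{eq:CF_formC}), together with the realness of $g$ and the integrability check $\cos\left(\tfrac{\pi}{2}\alpha\theta\right)>0$ for admissible $(\alpha,\theta)$ outside the degenerate point $\alpha=1$, $\theta=\pm1$, is exactly the standard derivation of both half-line forms from the Fourier inversion formula. The paper does not reproduce a proof of this lemma (it defers to the cited reference \cite{Saenko2020b}), and your argument is the expected one, so there is nothing further to compare.
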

The proof of this lemma can be found in the article \cite{Saenko2020b}.

There is also an integral representation for the distribution function. For a strictly stable law with a characteristic function (\ref{eq:CF_formC}) it was obtained in the article \cite{Saenko2020b} and formulated as a corollary
\begin{corollary}\label{corol:SSL_cdf}
The distribution function of the stable law $G(x,\alpha,\theta)$ with characteristic function (\ref{eq:CF_formC})  can be represented in the form
\begin{enumerate}
  \item If $\alpha\neq 1$, then for any $|\theta|\leqslant\min(1,2/\alpha-1)$ and $x\neq0$
  \begin{equation}\label{eq:G(x)_int}
    G(x,\alpha,\theta)=\tfrac{1}{2}(1-\sign(x))+\sign(x)G^{(+)}(|x|,\alpha,\theta^*),
    \end{equation}
    where $\theta^*=\theta\sign(x)$,
    \begin{multline}\label{eq:G(x)_x>0}
      G^{(+)}(x,\alpha,\theta)=1-\frac{(1+\theta)}{4}(1+\sign(1-\alpha))\\
      + \frac{\sign(1-\alpha)}{\pi} \int_{-\pi\theta/2}^{\pi/2}\exp\left\{-x^{\alpha/(\alpha-1)} U(\varphi,\alpha,\theta)\right\}d\varphi, \quad x>0,
    \end{multline}
    and $U(\varphi,\alpha,\theta)$ is determined by the expression
    \begin{equation*}        U(\varphi,\alpha,\theta)=\left(\frac{\sin\left(\alpha\left(\varphi+\frac{\pi}{2}\theta\right)\right)}{\cos\varphi}\right)^{\alpha/(1-\alpha)} \frac{\cos\left(\varphi(1-\alpha)-\frac{\pi}{2}\alpha\theta\right)}{\cos\varphi}.
    \end{equation*}
  \item If $\alpha=1$, then for any $-1\leqslant\theta\leqslant1$ and any  $x$
  \begin{equation}\label{eq:G(x)_a=1}
    G(x,1,\theta)=\frac{1}{2}+\frac{1}{\pi}\arctan\left(\frac{x-\sin(\pi\theta/2)}{\cos(\pi\theta/2)}\right).
  \end{equation}
  \item If $x=0$, then for any admissible $\alpha$ and $\theta$
  \begin{equation*}
    G(0,\alpha,\theta)=(1-\theta)/2.
  \end{equation*}
\end{enumerate}
\end{corollary}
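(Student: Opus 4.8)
The plan is to obtain the integral representation by inverting the characteristic function (\ref{eq:CF_formC}) and then reducing the resulting oscillatory integral to the monotone ``$U$-form'' by the stationary-phase (Zolotarev) substitution, treating the three cases $\alpha\neq1$, $\alpha=1$ and $x=0$ separately. First I would use the inversion property (\ref{eq:InversionFormula}) to restrict attention to $x>0$: writing $G(x,\alpha,\theta)=1-G(|x|,\alpha,-\theta)$ for $x<0$ produces exactly the prefactor $\tfrac12(1-\sign(x))+\sign(x)G^{(+)}(|x|,\alpha,\theta^*)$ with $\theta^*=\theta\sign(x)$, so it suffices to establish the formula for $G^{(+)}(x,\alpha,\theta)$ at $x>0$.

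For $\alpha\neq1$ I would start from the density inversion formula of Lemma~\ref{lem:Inverse},
\[
 g(x,\alpha,\theta)=\frac{1}{\pi}\Re\int_{0}^{\infty}e^{-itx}\exp\{-t^{\alpha}e^{-i\frac{\pi}{2}\alpha\theta}\}\,dt ,
\]
and rotate the contour of integration in the complex $t$-plane. The admissible rotation angle, and hence the sign of the exponent $\alpha/(\alpha-1)$ that appears, is dictated by $\sign(1-\alpha)$; after rescaling $t=x^{1/(\alpha-1)}\tau$ and changing the variable to the angle $\varphi$ ranging over $(-\tfrac{\pi}{2}\theta,\tfrac{\pi}{2})$, the Jacobian collapses the integrand to the kernel $U(\varphi,\alpha,\theta)$, yielding the density representation
\[
 g(x,\alpha,\theta)=\frac{\alpha\,x^{1/(\alpha-1)}}{\pi|\alpha-1|}\int_{-\pi\theta/2}^{\pi/2}U(\varphi,\alpha,\theta)\exp\{-x^{\alpha/(\alpha-1)}U(\varphi,\alpha,\theta)\}\,d\varphi .
\]
The distribution function then follows by integration in $x$: since $\frac{d}{dx}\exp\{-x^{\alpha/(\alpha-1)}U\}=-\frac{\alpha}{\alpha-1}x^{1/(\alpha-1)}U\exp\{-x^{\alpha/(\alpha-1)}U\}$ and $\sign(1-\alpha)/(\alpha-1)=-1/|\alpha-1|$, the right-hand side of (\ref{eq:G(x)_x>0}) has $x$-derivative exactly $g(x,\alpha,\theta)$. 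It remains to fix the single additive constant, which I would determine from the boundary value $G^{(+)}(x,\alpha,\theta)\to1$ as $x\to\infty$: for $\alpha<1$ the factor $x^{\alpha/(\alpha-1)}\to0$, so the integral tends to $\int_{-\pi\theta/2}^{\pi/2}d\varphi=\tfrac{\pi}{2}(1+\theta)$, whereas for $\alpha>1$ the factor $x^{\alpha/(\alpha-1)}\to\infty$ and the integral vanishes; matching in each case reproduces the constant $1-\tfrac{(1+\theta)}{4}(1+\sign(1-\alpha))$.

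The case $\alpha=1$ I would handle by direct evaluation. Substituting $\alpha=1$ into (\ref{eq:CF_formC}) and separating $t>0$ and $t<0$ gives $\hat g(t,1,\theta)=\exp\{-|t|\cos(\tfrac{\pi}{2}\theta)+it\sin(\tfrac{\pi}{2}\theta)\}$, which is the characteristic function of a Cauchy law with scale $\cos(\tfrac{\pi}{2}\theta)$ and shift $\sin(\tfrac{\pi}{2}\theta)$; inverting and integrating the resulting density yields (\ref{eq:G(x)_a=1}). Finally the value at $x=0$ follows as a consistency limit: letting $x\to0^{+}$ in the representation just established (the exponent $x^{\alpha/(\alpha-1)}$ tends to $\infty$ for $\alpha<1$ and to $0$ for $\alpha>1$) gives $G^{(+)}(0^{+},\alpha,\theta)=(1-\theta)/2$ in both regimes, and the same value is recovered from the arctangent at $\alpha=1$.

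The main obstacle is the contour rotation and the change of variables in the $\alpha\neq1$ case. Justifying that the circular arc at infinity contributes nothing requires careful angular bookkeeping that behaves differently for $\alpha<1$ and $\alpha>1$ (which is the analytic origin of the $\sign(1-\alpha)$ dichotomy), and one must verify that the substitution maps the rotated ray bijectively onto $(-\tfrac{\pi}{2}\theta,\tfrac{\pi}{2})$ with precisely the Jacobian that produces $U(\varphi,\alpha,\theta)$. The delicate points are the endpoints $\varphi=-\tfrac{\pi}{2}\theta$ and $\varphi=\tfrac{\pi}{2}$, where $\sin(\alpha(\varphi+\tfrac{\pi}{2}\theta))$ or $\cos\varphi$ vanishes and $U$ degenerates; there the integrand must be shown to remain integrable, and the interchange of the $x$-integration with the $\varphi$-integration (needed to pass from the density to the distribution function) must be justified by a monotone- or dominated-convergence argument.
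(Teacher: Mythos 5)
A preliminary point: the paper does not prove this corollary at all. It appears in Appendix~\ref{sec:IntRepr} as an imported result, and the text states explicitly that its proof ``can be found in the paper \cite{Saenko2020b}'', where the inverse Fourier transform of (\ref{eq:CF_formC}) was performed by the stationary phase method. So there is no in-paper proof to compare you against; the relevant comparison is with the derivation the paper attributes to that reference, and your outline follows exactly that route: invert the characteristic function via Lemma~\ref{lem:Inverse}, rotate the contour, change variables to obtain the monotone kernel $U$, integrate in $x$, and use the inversion property (\ref{eq:InversionFormula}) to pass to $x<0$.

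On the substance, the parts of your proposal that can be checked directly are correct. The reduction to $x>0$ via (\ref{eq:InversionFormula}) reproduces the prefactor structure of (\ref{eq:G(x)_int}); differentiating the right-hand side of (\ref{eq:G(x)_x>0}) in $x$ gives precisely your density kernel, since $\sign(1-\alpha)\bigl(-\tfrac{\alpha}{\alpha-1}\bigr)=\alpha/|\alpha-1|$; the boundary matching as $x\to\infty$ (the integral tends to $\tfrac{\pi}{2}(1+\theta)$ for $\alpha<1$ and to $0$ for $\alpha>1$) fixes the additive constant to $1-\tfrac{(1+\theta)}{4}(1+\sign(1-\alpha))$, and the same representation gives $G^{(+)}(0^{+})=(1-\theta)/2$ in both regimes; for $\alpha=1$ the characteristic function (\ref{eq:CF_formC}) is indeed that of a Cauchy law with location $\sin(\tfrac{\pi}{2}\theta)$ and scale $\cos(\tfrac{\pi}{2}\theta)$, which yields (\ref{eq:G(x)_a=1}). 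However, what you have written is a plan rather than a proof: the entire analytic content --- justifying the contour rotation (vanishing of the arc at infinity, with the angular bookkeeping that produces the $\sign(1-\alpha)$ dichotomy), showing the substitution maps onto $(-\tfrac{\pi}{2}\theta,\tfrac{\pi}{2})$ with Jacobian yielding exactly $U$, establishing $U\geqslant 0$ on that interval (without which your limit arguments fixing the constant do not go through), and the convergence interchanges at the degenerate endpoints --- is flagged as an ``obstacle'' and left unexecuted. Those steps are precisely the proof in \cite{Saenko2020b}; until they are carried out, your argument shows only that (\ref{eq:G(x)_x>0}) is self-consistent (correct $x$-derivative relative to an asserted density formula, correct boundary values), not that it equals the distribution function of (\ref{eq:CF_formC}).
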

\bibliographystyle{elsarticle-num}
\bibliography{d:/bibliography/library}
\end{document}